\numberwithin{equation}{section}
\newtheorem{thm}{Theorem}[section]
\newtheorem{lemma}[thm]{Lemma}
\newtheorem{prop}[thm]{Proposition}
\newtheorem{cor}[thm]{Corollary}
\theoremstyle{definition}
\newtheorem{rem}[thm]{Remark}
\newtheorem{nota}[thm]{Notation}
\theoremstyle{definition}
\newtheorem{defn}[thm]{Definition}
\newcommand{\be}{\begin{eqnarray}}
\newcommand{\ee}{\end{eqnarray}}
\newcommand{\comment}[1]{}
\begin{document}

\title{A mod 2 index theorem for pin$^{-}$ manifolds}
\author{Weiping Zhang}\thanks{Reset from the original MSRI Preprint No. 053-94.}

\address{Chern Institute of Mathematics \& LPMC, Nankai
University, Tianjin 300071, P.R. China}
\email{weiping@nankai.edu.cn}

\begin{abstract}    We establish a mod 2 index theorem for real vector bundles over $8k+2$ dimensional compact pin$^{-}$ manifolds. The analytic index is the reduced $\eta$ invariant of (twisted) Dirac operators and the topological index is defined through $KO$-theory. Our main result extends the mod 2 index theorem of Atiyan and Singer  to non-orientable manifolds. 
\end{abstract}

\maketitle
\tableofcontents

\setcounter{section}{-1}

\section{Introduction} \label{s0}

Let $B$ be an $8k+2$ dimensional compact pin$^-$ manifold. By this we always assume a pin$^-$ structure has been chosen on $TB$. Let $E$ be a real vector bundle over $B$. By introducing suitable metrics and connections on $TB$ and $E$, one can define a self-adjoint `twisted' Dirac operator $\widetilde{D}_{B,E}$ on $B$ with coefficient $E$. The reduced $\eta$ invariant \cite{APS} of $\widetilde{D}_{B,E}$, denoted by $\overline{\eta}( \widetilde{D}_{B,E})$, turns out to be mod 2 independent of the metrics and connections appeared in the definition of $\widetilde{D}_{B,E}$. Thus $\overline{\eta}( \widetilde{D}_{B,E})$ is a mod 2 topological invariant (in fact a pin$^-$ cobordism invariant, as we will see in the main text). 

The purpose of this paper is to give a purely topological formula for this analytically defined invariant. Our motivation of proving such a formula comes from the Rokhlin type congruence formulas we proved in Zhang \cite{Z1}, where pin$^-$ manifolds appear as obstructions to the existence of spin structures on oriented manifolds.

Now suppose $B$ is orientable and carries an orientation. Then $B$ is a spin manifold carrying a spin structure induced from the pin$^-$ structure. The reduced $\eta$ invariant turns out to be the mod 2 analytic index defined by Atiyah and Singer \cite{ASV}. 

Recall that in this case a topological index was defined by Atiyah-Singer \cite{ASV} and an equality between the analytic and topological indices was established in \cite{ASV}. 

Our topological interpretation of the reduced $\eta$ invariants for pin$^-$ manifolds is inspired by Atiyah-Singer's construction. The topological index we will define will turn out to lie in ${\bf Z}[\frac{1}{2}]$ (mod 2).

After making clear what should be proved, we find the result follows from an easy modification of the paper by Bismut-Zhang \cite{BZ} where a Riemann-Roch property for reduced $\eta$ invariants on odd dimensional manifolds was formulated and proved. In fact, a direct proof of the Atiyah-Singer mod 2 index theorem \cite{ASV} along the lines of \cite{BZ} has already been worked out in Zhang \cite{Z2}. One thing to be remarked is that while in \cite{Z2}, one need not use the local index techniques in \cite{BZ}, here for non-orientable manifolds, the full strength of the techniques in \cite{BZ}, which in turn rely on Bismut-Lebeau \cite{BL}, should be used. 

Twisted Dirac operators and their reduced $\eta$ invariants were first studied by Gilkey \cite{G} for pin$^c$ manifolds. In \cite{St}, Stolz studied the reduced $\eta$ invariants on pin$^+$ $(8k+4)$-manifolds and used them to detect for example the exotic ${\bf R}P^4$ constructed by Cappell and Shaneson \cite{CS}. The same method here can be used to give unified topological formulas for these reduced $\eta$ invariants too. The modifications are fairly easy and will not be carried out in this paper. 

Our results suggest that one can use the reduced $\eta$ invariants to detect pin$^-$ cobordism classes. 

Also our definition of the topological index seems to be closely related to the $KR$-theory developed by Atiyah \cite{At} and hopefully will find applications in real algebraic geometry. In fact one of the first applications of the original Rokhlin congruence \cite{R} lies in real algebraic geometry. 

This paper is organized as follows. In the first section, we recall some algebraic preliminaries which will be used in the rest of this paper. In Section 2 we define twisted Dirac operators and the associated analytic index. Section 3 contains the definition of the topological index. In Section 4 we establish an equality between the analytic and topological indices defined in Sections 2 and 3 respectively, based on a Riemann-Roch property for the analytic index. This Riemann-Roch property will be proved in Section 5. There is also an Appendix in which we prove an extended Rokhlin congruence formula not included in Zhang \cite{Z1}. The mod 2 indices studied in the main text appear most naturally in this version of Rokhlin congruences.

\section{Algebraic preliminaries}\label{s1}

In this Section, we recall some elementary algebraic facts for the completeness of this paper. A standard reference is the paper of Atiyah, Bott and Shapiro \cite{ABS}. One can also consult Lawson-Michelsohn's book \cite{LM}. 

This section is organized as follows. In a), we recall the basic definitions of pin$^-$ group and their representations. We pay special attention to dimensions $8k+2$ and $8k+3$ which are essential for this paper. In b), we recall the real structure of the spinor representations in dimension $8k$. This plays the basic role in our definition of the topological index in Section 3. In c), we recall a factorization formula for pin$^-$ representations. 

$\ $

{\it a).  Pin$^-$ groups and their representations}

Let $E$ be an $n$ dimensional oriented Euclidean space. Let $c(E)$ be the real Clifford algebra of $E$. That is, $c(E)$ is spanned over ${\bf R}$ by $1,\, e$, $e\in E$ and the commutation relations $ee'+e'e=-2\langle e,e'\rangle$. The pin$^-$ group in dimension $n$, pin$^-(n)$, is the multiplication group generated by $e\in E\subset c(E)$, $\|e\|=1$. Let $\chi$ be the representation $\chi: {\rm pin}^-(n)\rightarrow O(1)$ given by
\begin{align} \label{1.1}
\chi:\ e_{i_1}\cdots e_{i_j}\mapsto (-1)^{j},\ \ i_l\neq i_k,\ \ l\neq k.
\end{align}

Let $\gamma: {\rm pin}^-(n)\rightarrow O(n)$ be the canonical representation defined by $\gamma(e)(\omega)=e\omega e$ for $\omega,\,e\in E\subset c(E)$, $\|e\|=1$. Let $\Delta$ be a pin$^-$ module. Then one verifies that
\begin{align} \label{1.2}
\omega(ew)=\omega e\omega^{-1}(\omega)w=\chi(\omega)(\gamma(\omega)e)(\omega w)
\end{align}
for $\omega\in{\rm pin}^-(n)$, $e\in E$ and $w\in\Delta$.

Thus, the Clifford action 
\begin{align} \label{1.3}
c: \, E\otimes\Delta\rightarrow \chi\otimes\Delta
\end{align}
is pin$^-$ equivariant 

We now assume $n=8k+3$. Then by Atiyah-Bott-Shapiro \cite{ABS}, $c(E)={\rm End}_{\bf H}(S_+)\oplus {\rm End}_{\bf H}(S_-)$, $\dim S_+=\dim S_-=2^{4k}$.  Let $e_1,\,\cdots,\,e_n$ be an oriented orthonormal basis of $E$. Set $s_n=e_1\cdots e_n$. Then $S_\pm$ are characterized by $s_n$, acting on $S_\pm$ as $\pm{\rm Id}$. We will fix $S_+$ as {\it the} irreducible module of $c(E)$ as well as pin$^-(n)$. One has $S_-=\chi\otimes S_+$ and the Clifford action $c:E\otimes S_+\rightarrow S_-$ is pin$^-(n)$ equivariant. Also $S_\pm$ carry naturally induced metrics. 

Now let $G$ be a Euclidean space of dimension $8k+2$. Let $E={\bf R}\oplus G$. Then viewing $G\subset E$, we can view $S_+(E)$ as a pin$^-(G)$ module and have the Clifford action 
\begin{align} \label{1.4}
c: \, G\otimes S_+\rightarrow S_-.
\end{align}
Let $e\in G^\perp\subset E$, $\|e\|=1$. One then composes (\ref{1.4}) to a pin$^-$ equivariant action
\begin{align} \label{1.5}
c(e)c: \, G\otimes S_+\rightarrow S_+.
\end{align}

Without any confusion, we will note $S_+(G)$ from now on for $S_+$. This is sometimes called a tangential representation. It plays a fundamental role in the construction of twisted Dirac operators in Section 2. 

$\ $

{\it b). Spin representations in dimension $8k$}

Now let $E$ be an $8k$ dimensional oriented Euclidean space. Then by \cite{ABS}, 
\begin{align} \label{1.6}
c(E)={\rm End}_{\bf R}(F)=F\otimes F^*,
\end{align}
where $F=F_+\oplus F_-$ is the ${\bf Z}_2$-graded Euclidean space of $E$-spinors. Let $e_1,\,\cdots,\,e_{8k}$ be an oriented orthonormal basis of $E$. Then $F_\pm$ are characterized by $s_{8k}=e_1\cdots e_{8k}$ acting on $F_\pm$ as $\pm{\rm Id}$. 

The real space structure of   the Clifford algebra of an $8k$ dimensional space and the corresponding irreducible spin representations play important roles in the definition of the topological index in Section 3. 

Now let $E^*$ be the dual of $E$ carrying the dual metric. If $e\in E$, let $e^*\in E^*$ correspond to $e$ by the scalar product of $E$. If $e\in E$, let $c(E)$, $\widehat c(e)$ be the operators acting on $\Lambda(E^*)$, 
\begin{align} \label{1.7}
c(e)= e^*\wedge -i_e,\ \ \ \ \ \ \ \widehat c(e)=e^*\wedge+i_e.
\end{align}

Recall that we have the identification of ${\bf Z}$-graded real vector spaces
\begin{align} \label{1.8}
c(E)=\Lambda\left(E^*\right).
\end{align}

Let $\sigma$ be $1$ on $c^{\rm even}(E)$, $-1$ on $c^{\rm odd}(E)$. Then $\sigma$ also acts in the obvious way on $\Lambda(E^*)$. Under the identification (1.8), $c(e)$ is exactly the left Clifford multiplication by $e$ and $\widehat c(e)\sigma$ is the right Clifford action by $e$. Also, as $F$ is a $c(E)$ module, (1.6) is an identification of the left and right Clifford modules. 

Let $\tau=\pm 1$ on $F_\pm$, $\tau^*=\pm 1$ on $F^*_\pm$. Then $\tau,\ \tau^*$ act on $F\otimes F^*$ as $\tau\otimes 1$, $1\otimes\tau^*$. One easily verifies that
\begin{align} \label{1.9}
\tau^*=\sigma\tau.
\end{align}

$\ $

{\it c). Decomposition of pin$^-$ equivariant representations}

Now let $G$ be an $8k+2$ dimensional oriented Euclidean space and $E$ an $8l$ dimensional oriented Euclidean space. Then by using the notation as in the previous subsections, one gets
\begin{align} \label{1.10}
c(G\oplus E) =c(G)\widehat\otimes c(E),
\end{align}
where $\widehat\otimes$ is the standard notation for super tensor product. By (1.10) one gets easily
the following pin$^-$  equivariant factorization of representations,
\begin{align} \label{1.11}
S_+(G\oplus E)=S_+(G)\otimes F(E). 
\end{align}

\section{Twisted Dirac operators on pin$^-$ manifolds and $\eta$ invariants
}\label{s2} 

In this Section, we define the twisted Dirac operators on $8k+2$ dimensional pin$^-$ manifolds with coefficients in real vector bundles. The word ``twisted" reflets the fact that these operators are defined as ``tangential operators" of Dirac operators on associated $8k+3$ dimensional manifolds. Then we recall the definition of the reduced $\eta$ invariants of twisted Dirac operators and show that these invariants are mod 2 topological invariants. 

This Section is organized as follows. In a), we recall the definition of pin$^-$ manifolds. In b), we define the twisted Dirac operators associated to vector bundles over pin$^-$ manifolds. In c), we recall the definition of reduced $\eta$ invariants and its basic properties.

$\ $

{\it a). Pin$^-$ structures on vector bundles and manifolds}

A basic reference for this subsection is Kirby-Taylor \cite{KT}.

Let $B$ be a compact manifold. Let $E$ be a rank $n$ real vector bundle over $B$.

\begin{defn}\label{t2.1}
If there is a pin$^-(n)$ principal bundle $P$ over $B$ and a representation $\rho:{\rm pin}^-(n)\rightarrow {\rm End}({\bf R}^n)$ such that $E=P\times_\rho {\bf R}^n$, then we call $E$ a pin$^-$ vector bundle over $B$ carrying a pin$^-$ structure determined by $(P,\rho)$.
\end{defn}

The following characterization of a vector bundle to be a pin$^-$ bundle can be found for example in Stolz \cite{St}.

\begin{prop}\label{t2.2}
A real vector bundle $E$ over a compact manifold $B$ is a pin$^-$ vector bundle if and only if it's Stiefel-Whitney classes satisfy the condition that $w_1^2(E)+w_2(E)=0$.
\end{prop}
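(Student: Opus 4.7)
The plan is to identify a ${\rm pin}^-$ structure on $E$ with a lift of the orthonormal frame bundle of $E$ from structure group $O(n)$ to ${\rm pin}^-(n)$ along the double cover ${\rm pin}^-(n) \to O(n)$. Since this cover is a central $\Z/2$-extension $1 \to \Z/2 \to {\rm pin}^-(n) \to O(n) \to 1$, standard obstruction theory furnishes a universal class $\omega \in H^2(BO(n); \Z/2)$ whose pullback to $B$ via the classifying map of $E$ is the sole obstruction to the existence of the lift. Thus the proposition reduces to computing $\omega$.

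Since $H^2(BO(n); \Z/2) = \Z/2\langle w_1^2 \rangle \oplus \Z/2 \langle w_2 \rangle$, one may write $\omega = a w_1^2 + b w_2$ with $a,b \in \Z/2$, so the claim becomes $a = b = 1$. I would determine $b$ by restricting the extension to $SO(n) \subset O(n)$: the preimage of $SO(n)$ in ${\rm pin}^-(n)$ is the usual ${\rm Spin}(n)$, so the restricted extension is the classical spin extension, which is known to be classified by $w_2$ in $H^2(BSO(n); \Z/2)$. As pulling $\omega$ back along $BSO(n) \to BO(n)$ kills $w_1$, this forces $b w_2 = w_2$, and hence $b = 1$.

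Next, I would determine $a$ by restricting to a subgroup $O(1) \subset O(n)$ generated by a single reflection through a unit vector $e$. By the Clifford relation $e^2 = -\langle e,e\rangle = -1$ characterizing the ${\rm pin}^-$ case, the preimage of this $O(1)$ in ${\rm pin}^-(n)$ is $\{\pm 1, \pm e\} \cong \Z/4$, the non-split extension of $\Z/2$ by $\Z/2$. This extension is classified by the generator of $H^2(BO(1); \Z/2) = \Z/2 \langle w_1^2 \rangle$; since $w_2$ vanishes on $BO(1)$, this forces $a = 1$. Combining, $\omega = w_1^2 + w_2$, so the obstruction to lifting the frame bundle of $E$ to a ${\rm pin}^-(n)$-bundle is precisely $w_1^2(E) + w_2(E)$.

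The main obstacle is pinning down the characteristic class $\omega$ of the extension, and in particular distinguishing it cleanly from the ${\rm pin}^+$ case, where the obstruction is $w_2$ alone. This distinction is forced by the sign in the Clifford relation: in ${\rm pin}^-$ one has $e^2 = -1$ so ${\rm pin}^-(1) \cong \Z/4$, whereas in ${\rm pin}^+$ one has $e^2 = +1$ and ${\rm pin}^+(1) \cong \Z/2 \oplus \Z/2$, corresponding to a split extension; this single sign is what switches $a$ between $1$ and $0$. The rest is routine obstruction theory for central extensions of Lie groups, essentially as carried out in Kirby--Taylor \cite{KT}.
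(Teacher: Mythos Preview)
Your argument is correct and essentially the standard one: identify the obstruction to lifting along the central $\Z/2$-extension ${\rm pin}^-(n)\to O(n)$ with a universal class in $H^2(BO(n);\Z/2)$, then pin it down by restriction to $SO(n)$ (forcing the $w_2$ coefficient) and to $O(1)$ (forcing the $w_1^2$ coefficient via $e^2=-1$). The distinction you draw with the ${\rm pin}^+$ case is exactly the right diagnostic.

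By contrast, the paper does not prove this proposition at all: it is stated as a known characterization and attributed to Stolz \cite{St} (see also Kirby--Taylor \cite{KT}). So there is nothing to compare at the level of argument; you have supplied a self-contained proof where the paper simply quotes the literature. If anything, your write-up is more informative than what the paper offers here, and it makes transparent why the sign convention $ee'+e'e=-2\langle e,e'\rangle$ adopted in Section~\ref{s1} is what produces the $w_1^2$ term.
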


\begin{defn}\label{t2.3}
If  the tangent bundle $TB$ of a compact manifold $B$ carries a pin$^-$ structure, then $B$ is called a pin$^-$ manifold. And in general by this we will mean $B$ has been equipped with a pin$^-$ structure.
\end{defn}

{\it b). The twisted Dirac operators on an $8k+2$ dimensional pin$^-$ manifold}

Let $B$ be a compact $8k+2$ dimensional pin$^-$ manifold. Then $M= (-1,0]\times B$ is an $8k+3$ dimensional pin$^-$ manifold with boundary $\partial M=\{0\}\times B$, and carries an induced pin$^-$ structure. Note $\pi:(-1,0]\times B\rightarrow B$ the projection map. 

Let $g^{TB}$ be a metric on $TB$. Let $(-1,0]$ carry the standard metric $dt^2$. Let $g^{TM}=dt^2\oplus g^{TB}$ be the product metric on $M$.

Recall that a specific  pin$^-(8k+3)$ representation has been described  in Section 1a).
Let $S_+(M)$ be the associated pinor bundle over $M$ for $(M,g^{TM})$. Then $S_+(M)$ carries a canonical metric $g^{S_+(M)}$ induced from $g^{TM}$. Also the Levi-Civita connection $\nabla^{TM}$ of $g^{TM}$ lifts to a Euclidean connection $\nabla^{S_+(M)}$ on $S_+(M)$.

Let $E$ be a real vector bundle over $B$. Let $g^E$ be a metric on $E$. Let $\nabla^E$ be a Euclidean connection on $E$. Then $\pi^*\nabla^E$ is a Euclidean connection on $(\pi^*E,\pi^*g^E)$.

Let $\nabla^{S_+(M)\otimes \pi^*E}$ be the connection on $S_+(M)\otimes\pi^*E$ defined by
\begin{align} \label{2.1}
\nabla^{S_+(M)\otimes \pi^*E}(u\otimes v)=\nabla^{S_+(M)}u\otimes v+u\otimes \pi^*\nabla^Ev
\end{align}
for $u\in\Gamma(S_+(M)),\ v\in\Gamma(\pi^*E)$. Then  $\nabla^{S_+(M)\otimes \pi^*E}$ is a Euclidean connection on $(S_+(M)\otimes \pi^*E,g^{S_+(M)}\otimes \pi^*g^E)$.

Let $o(TM)$ be the orientation bundle of $TM$.

Let $e_1,\,\cdots,\,e_{8k+2}$ be an orthonormal basis of $TB$. Then $\frac{\partial}{\partial t},\,\pi^*e_1,\,\cdots,\,\pi^*e_{8k+2}$ is an orthonormal basis of $TM$.

\begin{defn}\label{t2.4}
The Dirac operator $D_{M,\pi^*E}$ is the differential operator from $\Gamma(S_+(M)\otimes\pi^*E)$ to $\Gamma(o(TM)\otimes S_+(M)\otimes\pi^*E)$ defined by
\begin{align} \label{2.2}
D_{M,\pi^*E}=c\left( \frac{\partial}{\partial t}\right)\nabla^{S_+(M)\otimes \pi^*E}_{\frac{\partial}{\partial t}}
+\sum_{i=1}^{8k+2}c\left(\pi^*e_i\right)\nabla^{S_+(M)\otimes \pi^*E}_{\pi^*e_i}.
\end{align}
\end{defn}

Clearly, $D_{M,\pi^*E}$ is a first order elliptic differential operator.

Now since $M$ is of product structure, one has $\nabla^{S_+(M)\otimes \pi^*E}_{\frac{\partial}{\partial t}}=\frac{\partial}{\partial t}$ and we can write (\ref{2.2}) as
\begin{multline}\label{2.3}
D_{M,\pi^*E}=-c\left( \frac{\partial}{\partial t}\right)
\left( - {\frac{\partial}{\partial t}}
+ c\left( \frac{\partial}{\partial t}\right)\sum_{i=1}^{8k+2}c\left(\pi^*e_i\right)\nabla^{S_+(M)\otimes \pi^*E}_{\pi^*e_i}\right)
\\
=-c\left( \frac{\partial}{\partial t}\right)
\left( - {\frac{\partial}{\partial t}}
+\pi^*\widetilde D_{B,E}\right),
\end{multline}
where $\widetilde D_{B,E}$ is the uniquely determined differential operator on $\Gamma(S_+(M)|_B\otimes E)$. 

\begin{nota}\label{t2.5}
From now on, we will denote by $S_+(B)$ the bundle $S_+(M)|_B$.
\end{nota}

\begin{defn}\label{t2.6}
The operator $\widetilde D_{B,E}$ is called the twisted Dirac operator on $B$ with coefficient $E$. 
\end{defn}

One verifies easily that $\widetilde D_{B,E} : \Gamma(S_+(B)\otimes E) \rightarrow  \Gamma(S_+(B)\otimes E) $ is a formally self-adjoint first order differential operator. 

$\ $ 

{\it c). Reduced $\eta$ invariants as analytic indices}

The $\eta$ and reduced $\eta$ invariants were introduced by Atiyah, Patodi and Singer \cite{APS} in their study of index theorems for manifolds with boundary. We recall the definition in our context.

The $\eta$ function of $\widetilde D_{B,E} $ is defined by
\begin{align} \label{2.4}
 \eta\left( \widetilde D_{B,E} ,s\right)=\sum\frac{{\rm sgn}(\lambda)}{|\lambda|^s},\ \ {\rm Re}(s)>>0,
\end{align}
where $\lambda$ runs over the non-zero eigenvalues of $\widetilde D_{B,E} $. 

Standard methods show that $\eta(\widetilde D_{B,E} ,s)$ is a holomorphic function on the half plane ${\rm Re}(s)>>0$, and can be extended to a meromorphic function on the whole complex plane, which is holomorphic at $s=0$ \cite{APS}. 

The value of $\eta(\widetilde D_{B,E} ,s)$ at $s=0$ is called the $\eta$ invariant of $\widetilde D_{B,E} $ and is denoted by $\eta(\widetilde D_{B,E} )$. The reduced $\eta$ invariant of $\widetilde D_{B,E} $ is defined by \cite{APS}
\begin{align} \label{2.5}
 \overline \eta\left( \widetilde D_{B,E} \right)= \frac{\dim\ker \widetilde D_{B,E}  + \eta \left( \widetilde D_{B,E} \right)}{2}.
\end{align}

Now assume $B$ bounds an $8k+3$ dimensional compact pin$^-$ manifold $K$, and assume $E$ extends to a real vector bundle $\widehat E$ over $K$. Let $g^{TK}$, $g^{\widehat E}$ be the metrics on $TK$, $\widehat E$ respectively such that they restrict to $g^{TB}$, $g^E$ on the boundary and are of product structures near the boundary. Then the constructions in the previous subsection can be applied here. 

Let $\nabla^{\widehat E}$ be a Euclidean connection on $\widehat E$ which is of product structure near the boundary.

Let $D_{K,\widehat E}$ be the Dirac operator associated to $(K,g^{TK},\widehat E,g^{\widehat E}, S_+(K))$, which is defined similarly as in Definition \ref{t2.4}. We impose the Atiyah-Patodi-Singer boundary condition \cite{APS} on $D_{K,\widehat E}$. Then by the index theorem for manifolds with boundary of Atiyah-Patodi-Singer \cite{APS}, one has
\begin{align} \label{2.6}
 {\rm ind} D_{K,\widehat E}=- \overline \eta\left( \widetilde D_{B,E} \right).
\end{align}
The local index term disappears in (\ref{2.6}) simply because $\dim K$ is odd. 

Now by Section 1a), we know that $S_+(K)$ carries a quaternionic structure. Furthermore, $D_{K,\widehat E}$, as well as it's Atiyah-Patodi-Singer boundary condition, are  easily seen to be ${\bf H}$ linear. From (\ref{2.6}), one gets the following important result. 

\begin{prop}\label{t2.7}
If $B$ bounds a compact pin$^-$ manifold $K$ and $E$ extends to $K$, then $\overline\eta(\widetilde D_{B,E})$ is an even integer. 
\end{prop}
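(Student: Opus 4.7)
The plan is to use the index identity \eqref{2.6}, namely ${\rm ind}\, D_{K,\widehat E}=-\overline\eta(\widetilde D_{B,E})$, to reduce the claim to showing that ${\rm ind}\, D_{K,\widehat E}$ is an even integer. Since $B=\partial K$ and $\widehat E$ extends $E$, with all metrics and connections chosen of product form near $\partial K$, the APS index theorem is directly applicable and yields \eqref{2.6}, so this reduction is immediate.

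First I would promote the fiberwise quaternionic structure on $S_+$ recorded in Section~1a) to a globally defined parallel ${\bf H}$-action on the pinor bundle $S_+(K)$. The point is that right multiplication by ${\bf H}$ on the irreducible $c(TK_x)$-module $S_+$ commutes with the left Clifford multiplication by $TK_x$ and intertwines the ${\rm pin}^-(8k+3)$-action; hence it descends to a parallel bundle endomorphism of $S_+(K)$ that commutes with $c(e)$ for every $e\in TK$. Extending trivially on $\widehat E$ yields a parallel ${\bf H}$-action on $S_+(K)\otimes \widehat E$ compatible with the tensor-product connection and with Clifford multiplication, so that $D_{K,\widehat E}$ is ${\bf H}$-linear.

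Next I would verify the ${\bf H}$-linearity of the Atiyah--Patodi--Singer boundary condition. Applying the same remark to the tangential operator $\widetilde D_{B,E}$ on $S_+(B)\otimes E=S_+(K)|_B\otimes E$, whose Clifford action is \eqref{1.5}, one sees that $\widetilde D_{B,E}$ commutes with the inherited ${\bf H}$-structure; consequently its spectral projection onto nonnegative eigenvalues, and therefore the APS boundary condition imposed on $D_{K,\widehat E}$, is ${\bf H}$-linear.

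Combining these observations, $\ker D_{K,\widehat E}$ and ${\rm coker}\, D_{K,\widehat E}$ (for the APS problem) are finite-dimensional ${\bf H}$-vector spaces, so each has real dimension divisible by $4$; in particular ${\rm ind}\, D_{K,\widehat E}\in 2{\bf Z}$. The identity \eqref{2.6} then delivers $\overline\eta(\widetilde D_{B,E})\in 2{\bf Z}$. The only step requiring genuine attention is checking the compatibility of the quaternionic structure with the tangential Clifford action \eqref{1.5} (which inserts an extra factor $c(e)$ for a unit normal $e$), but since the entire $c(TK)$-action is ${\bf H}$-linear in the fiber this is not a real obstacle.
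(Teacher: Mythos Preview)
Your argument is correct and is precisely the approach taken in the paper: the text immediately preceding the proposition invokes \eqref{2.6}, the quaternionic structure on $S_+(K)$ from Section~1a), and the ${\bf H}$-linearity of $D_{K,\widehat E}$ together with its APS boundary condition to conclude. Your write-up simply unpacks in detail the two clauses the paper leaves as ``easily seen'' (parallelism of the ${\bf H}$-action and its compatibility with the tangential operator), and your observation that the index in fact lies in $4{\bf Z}$ is a harmless strengthening of the stated evenness.
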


By Proposition \ref{t2.7}, we know that for any $8k+2$ dimensional compact pin$^-$ manifold $B$ and a real vector bundle $E$ over $B$, the quantity $\overline\eta(\widetilde D_{B,E})$ (mod 2) is a well defined pin$^-$ cobordism invariant. In particular, it does not depend on the metrics and connections used to define it. 

\begin{defn}\label{t2.8}
The quantity $\overline\eta(\widetilde D_{B,E})$ (mod 2) is called the mod 2 analytic index of $E$ and is denoted by ${\rm ind}^a(E)$.
\end{defn}

It is easy to verify that  ${\rm ind}^a(E)$ provides a homomorphism ${\rm ind}^a:{KO}(B)\rightarrow {\bf R}/2{\bf Z}$. The purpose of this paper is to give a unified topological formula for this analytically defined homomorphism.

\section{The topological index for vector bundles over pin$^-$ manifolds
}\label{s3} 

The purpose of this Section is to define what we call the topological index of a real vector bundle over an $8k+2$ dimensional pin$^-$ manifold.

Our construction is inspired by a construction of Atiyah and Singer \cite{ASV} (cf. also \cite{LM}), but is different in several aspects. The reason is that we here should take more care of the situation where our manifold is no longer orientable and that the group $\widetilde{KO}({\bf R}P^l)$ satisfies no periodicity of Bott type.

This Section is organized as follows. In a), we give a geometric construction of the direct image in $KO$-theory for embeddings between manifolds. In b), we discuss the pin$^-$ structures on real projective spaces. In c), we present our definition of the mod 2 topological index.

$\ $

{\it a). A geometric construction of the direct image for embeddings}

Let $i:Y\hookrightarrow X$ be an embedding of compact manifolds without boundary. Let $\pi:N\rightarrow Y$ be the normal bundle to $Y$ in $X$.

We make the assumption that ${\rm rk}(N)\equiv 0$ (mod 8) and that $N$ is an oriented spin vector bundle over $Y$, carrying a fixed spin structure. 

Let $E$ be a real vector bundle over $Y$. Then by the standard construction of Atiyah and Hirzebruch \cite{AH} and of Atiyah, Bott and Shapiro \cite{ABS} (cf. also Lawson-Michelsohn \cite{LM}), the direct image of $E$ under $i$ is a well defined element $i_!E\in\widetilde{KO}(X)$. 

In what follows we will give a concrete geometric realization of $i_!E$. This construction is inspired by Quillen's superconnection \cite{Q}. A similar construction for complex vector bundles has already appeared in Bismut-Zhang \cite{BZ}. 

Let $g^{TX}$ be a metric on $TX$. Let $g^{TY}$ be the metric on $TY$ determined by $g^{TX}$. Let $g^N$ be the induced metric on $N$ such that we have the orthogonal decomposition of vector bundles and metrics,
\begin{align}\label{3.1}
TX|_Y=TY\oplus N,
\end{align}
\begin{align}\label{3.2}
\left. g^{TX}\right|_Y=g^{TY}\oplus g^N.
\end{align}

Set for any $r>0$,
\begin{align}\label{3.3}
D_r(N)=\{n\in N|\, \|n\|\leq r\},\ \ \ \ \ \  S_r(N)=\partial D_r(N). 
\end{align}

Let $F(N)=F_+(N)\oplus F_-(N)$ be the bundle of $N$-spinors. Since ${\rm rk}(N)\equiv 0$ (mod 8), $F_\pm$ are real vector bundles by Section 1b). Let $F^*(N)=F_+^*(N)\oplus F_-^*(N)$ be the dual of $F(N)$. We are mainly interested in the ${\bf Z}_2$-graded vector bundle $F^*(N)\otimes E=F_+^*(N)\otimes E\oplus F_-^*(N)\otimes E$.

Let $n\in N$. Let $\widetilde c(n)$ be the transpose of the Clifford action $c(n)$ on $F(N)$. Then $\widetilde c(n)$ extends to an action on $F^*(N)\otimes E$ as $\widetilde c(n)\otimes{\rm Id}_E$, which we still note by $\widetilde c(n)$. 

Let now $G$ be a real vector bundle over $Y$ such that $F_-^*(N)\otimes E\oplus G$ is a trivial vector bundle over $Y$. The existence of $G$ is clear.

Consider the pair of vector bundles $\pi^*(F_\pm^*(N)\otimes E\oplus G)$ over $D_r(N)$. For any $n\in D_r(N)$, $n\neq 0$, the map
\begin{align}\label{3.4}
\widetilde c({n})\oplus{\rm Id}_{\pi^*G}:\pi^*\left(F_+^*(N)\otimes E\oplus G\right)
\rightarrow
\pi^*\left(F_-^*(N)\otimes E\oplus G\right)
\end{align}
is a linear isomorphism. In other words, $\pi^*(F_-^*(N)\otimes E\oplus G)$ and $\pi^*(F_+^*(N)\otimes E\oplus G)$
are equivalent over $D_r(N)\setminus Y$ for any $r>0$. In particular, over $S_1(N)$, we have two trivial vector bundles $\pi^*(F_\pm^*(N)\otimes E\oplus G)$ and an identifying map given by 
$v(n)=\widetilde c({n})\oplus{\rm Id}_{\pi^*G}$. This map $v$ can be trivially extended to the whole $X\setminus D_1(N)$ as an invertible map between two trivial vector bundles extending $\pi^*(F_\pm^*(N)\otimes E\oplus G)$ respectively. 

Denote by $\xi_\pm$ the two resulting vector bundles over $X$. Then it is clear that $\xi_+-\xi_-$ is a representative of the direct image $i_!E\in\widetilde{KO}(X)$ constructed in \cite{AH} and \cite{ABS}. This is our geometric realization of the direct image. We emphasize  that we have also constructed a map $v:\xi_+\rightarrow \xi_-$ between $\xi_+$ and $\xi_-$ such that $v$ is invertible on $X\setminus Y$ and that when restricted to $D_1(N)$, as $\xi_\pm=\pi^*(F_\pm^*(N)\otimes E\oplus G)$, $v$ takes the form
\begin{align}\label{3.6}
 v(n)= \widetilde c({n})\oplus{\rm Id}_{\pi^*G}
\end{align}
for $n\in D_1(N)$.

In the rest of this paper, whenever we refer to a direct image for an embedding, we will mean a geometric realization in the sense of this subsection. 

$\ $

{\it b). The pin$^-$ structures on real projective spaces}

Let $q$ be a positive integer. Let ${\bf R}P^q$ be the $q$ dimensional real projective space. Recall that the total Stiefel-Whitney class of ${\bf R}P^q$ is given by (cf. Milnor-Stasheff \cite{MS})
\begin{align}\label{3.7}
  w\left({\bf R}P^q\right)=(1+a)^{q+1},
\end{align}
where $a$ is the generator of $H^1({\bf R}P^q,{\bf Z}_2)$. 
 
From (\ref{3.7}) one deduces that
\begin{align}\label{3.8}
 w_1^2\left({\bf R}P^q\right)+w_2\left({\bf R}P^q\right)=\frac{(q+1)(3q+2)}{2}\,a^2.
\end{align}

By (\ref{3.8}), $w_1^2 ({\bf R}P^q )+w_2 ({\bf R}P^q )=0$ occurs when i), $q$ is odd and $q\equiv 3$ (mod 4) or ii), $q$ is even and $q\equiv 2$ (mod 4).

In the first case, we get a spin manifold while in the second case, we get a non-orientable pin$^-$ manifold. We will concentrate on the non-orientable case. 

On ${\bf R}P^{4k+2}$, there are exactly two different pin$^-$ structures. We will fix one as follows.

The orientation cover $S^{4k+2}$ of ${\bf R}P^{4k+2}$ bounds a disk $D^{4k+3}$ with its antipodal involution. We can extend this involution on $P=D^{4k+3}\times{\rm pin}^-(4k+3)$ by the left multiplication by $s_{4k+3}=e_1\cdots e_{4k+3}$ on the second factor, where $e_1,\,\cdots,\,e_{4k+3}$ is an oriented orthonormal basis of ${\bf R}^{4k+3}$. There is an obvious isomorphism between the associated bundle $P\times_\gamma{\bf R}^{4k+3}$ and the tangent bundle of $D^{4k+3}$. This ${\bf Z}/2$ equivariant pin$^-$ structure induces a pin$^-$ structure on ${\bf R}P^{4k+2}$. In the rest of this paper, we will always assume that ${\bf R}P^{4k+2}$ has this pin$^-$ structure. 

$\ $

{\it c). The definition of the topological index}

Let $B$ be an $8k+2$ dimensional compact pin$^-$ manifold with a fixed pin$^-$ structure. Let ${\bf R}P^{8k+2}$ be the real projective space with the pin$^-$ structure specified in the last subsection. Let $\gamma$ be the canonical real line bundle over ${\bf R}P^{8k+2}$. That is, $\gamma=o(T{\bf R}P^{8k+2})$, the orientation bundle of $T{\bf R}P^{8k+2}$. 

By a classical result of Steenrod-Whitney \cite{S}, there is a classifying map $f:B\rightarrow {\bf R}P^{8k+2}$, uniquely determined up to homotopy, such that
\begin{align}\label{3.9}
f^*(\gamma)=o(TB) ,
\end{align}
where $o(TB)$ is the orientation bundle of $TB$. 

On the other hand, one can find easily a sufficiently large integer $m$ so that there is an embedding $g:B\hookrightarrow S^{8m}$.

The maps $f$ and $g$ together give an embedding
\begin{align}\label{3.10}
h=(f,g):B\hookrightarrow {\bf R}P^{8k+2}\times S^{8m},\ \ \ \ x\mapsto (f(x),g(x)).
\end{align}

Now as $S^{8m}$ is an oriented spin manifold carrying the unique spin structure, $ {\bf R}P^{8k+2}\times S^{8m}$ is a pin$^-$ manifold carrying an induced pin$^-$ structure.

Let $N$ be the normal bundle to $B$ in ${\bf R}P^{8k+2}\times S^{8m}$. From (\ref{3.9}),  one verifies that
\begin{align}\label{3.11}
 w_1(N)=h^*\left(w_1\left({\bf R}P^{8k+2}\times S^{8m}\right)\right)-w_1(B)=f^*w_1\left({\bf R}P^{8k+2}\right)-w_1(B)=0
\end{align}
and that
\begin{multline}\label{3.12}
 w_2(N)=h^*\left(w_2\left({\bf R}P^{8k+2}\times S^{8m}\right)\right)-w_2(B)
\\
=f^*w_1^2\left({\bf R}P^{8k+2}\right)+w_2(B)=w_1^2(B)+w_2(B)=0.
\end{multline}

From (\ref{3.11}) and (\ref{3.12}), $N$ is a spin vector bundle over $B$. The orientation of $TS^{8m}$ induces an orientation on $N$. Furthermore,  the pin$^-$ structures on $B$ and ${\bf R}P^{8k+2}\times S^{8m}$ determine a spin structure on $N$.

Thus the constructions in Section 3a) apply here.

Let $E$ be a real vector bundle over $B$, we then get an element $i_!E\in \widetilde{KO}({\bf R}P^{8k+2}\times S^{8m})$. 

Now let $i_{8k+2}:{\bf R}P^{8k+2}\hookrightarrow {\bf R}P^{8k+2}$ be the identity map. Let $p$ be a point in $S^{8m}$ and let $i_p:{\bf R}P^{8k+2}\rightarrow S^{8m}$ be the collapsing map with image $p$. Then $i_{8k+2,m}=(i_{8k+2},i_p): {\bf R}P^{8k+2}\hookrightarrow {\bf R}P^{8k+2}\times S^{8m}$ is an embedding. Furthermore, $i_{8k+2,m}$ determines a map
\begin{align}\label{3.13}
\left( {i_{8k+2,m}}\right)_!: KO\left({\bf R}P^{8k+2}\right)\rightarrow \widetilde{KO}\left({\bf R}P^{8k+2}\times S^{8m}\right). 
\end{align}

By the Bott periodicity theorem (cf. \cite{LM}), $(i_{8k+2,m})_!$ is an isomorphism not depending on the choice of the point $p$.

\begin{nota}\label{t3.1}
If $a,\ b,\ c,\,\cdots\in {\bf R}$ is a series of real numbers, then we denote by ${\bf Z}\{a,\,b,\,c,\,\cdots\}$ the abelian subgroup of ${\bf R}$ generated by the numbers $a,\ b,\ c,\,\cdots $.
\end{nota}

The following result on the structure of $\widetilde{KO}({\bf R}P^{8k+2})$ is essential to our definition of the topological index. It replaces the Bott periodicity theorem in the spin case.

\begin{lemma}\label{t3.2} \mbox{{\rm (Adams \cite[Theorem 7.4]{Ad})}}. 
The group $\widetilde{KO}({\bf R}P^{8k+2})$ is an abelian group of order $2^{4k+2}$ generated by $1-\gamma$.
\end{lemma}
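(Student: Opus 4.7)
The plan is to follow Adams' argument in \cite{Ad}, establishing three facts in turn: the multiplicative relation $(1-\gamma)^2 = 2(1-\gamma)$, the generation of $\widetilde{KO}({\bf R}P^{8k+2})$ by $\nu := 1-\gamma$, and the determination of its exact additive order. For the first, since $\gamma$ is a real line bundle its transition functions take values in $\{\pm 1\}$, hence $\gamma\otimes\gamma$ is canonically trivial and $\gamma^2 = 1$ in $KO({\bf R}P^{8k+2})$. Expanding $(1-\gamma)^2$ then gives $\nu^2 = 2\nu$, so the subgroup ${\bf Z}\langle\nu\rangle$ is closed under the ring structure and satisfies $\nu^m = 2^{m-1}\nu$ for every $m \geq 1$.

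To see that $\nu$ generates all of $\widetilde{KO}({\bf R}P^{8k+2})$, I would run the Atiyah--Hirzebruch spectral sequence $E_2^{p,q}=H^p({\bf R}P^{8k+2};KO^q(\mathrm{pt}))\Rightarrow KO^{p+q}({\bf R}P^{8k+2})$. The class $\nu$ maps under the edge homomorphism to the generator $a\in H^1({\bf R}P^{8k+2};{\bf Z}_2)$, and its powers $\nu^j = 2^{j-1}\nu$ detect the higher filtration levels after mod-$2$ reduction. The $d_2$ and $d_3$ differentials, controlled by the Steenrod squares acting on mod-$2$ cohomology, collapse most of $E_2$, so that products of powers of $\nu$ with the surviving coefficient classes of $KO^*(\mathrm{pt})$ suffice to hit every element of $\widetilde{KO}({\bf R}P^{8k+2})$. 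Combined with $\nu^2 = 2\nu$, this shows every class is an integer multiple of $\nu$.

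The third and decisive step is to show the additive order is exactly $2^{4k+2}$. Here I would follow Adams and use the operation $\psi^3$, a ring endomorphism of $KO$. Since $\psi^3(\gamma)=\gamma^3=\gamma$ one has $\psi^3(\nu)=\nu$, i.e.\ $(\psi^3-1)\nu = 0$. On the other hand, $\psi^3$ acts as multiplication by $3^{4j}$ on the Bott generator of $\widetilde{KO}(S^{8j})\cong{\bf Z}$, so along the skeletal filtration of ${\bf R}P^{8k+2}$ coming from the cofiber sequences ${\bf R}P^{n-1}\hookrightarrow{\bf R}P^n\to S^n$, matching the identity action of $\psi^3$ on $\nu$ against the eigenvalues $3^{4j}$ on the relative groups produces $2$-adic divisibility constraints on the integer coefficient of $\nu$. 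A careful accounting of the contributions from each skeleton, with $j$ ranging so that $1\leq 8j\leq 8k+2$ and with the intermediate cells handled via the $\eta$-multiplications in $KO^*(\mathrm{pt})$, forces the order to be exactly $2^{4k+2}$.

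The main obstacle is this third step. Steps~1 and~2 are essentially formal consequences of $\gamma^2=1$ and standard AHSS machinery, but the precise $2$-adic order encodes genuine arithmetic content and corresponds to nontrivial extensions in the spectral sequence that cannot be solved from the $E_2$ page alone. Adams' use of $\psi^3$, equivalently via the $J$-homomorphism and the $e$-invariant, is the standard route; the exponent $4k+2$ coincides with the count of integers $s\in\{1,\ldots,8k+2\}$ with $s\equiv 0,1,2,4 \pmod{8}$, which is Adams' combinatorial formula for $\phi(n)$.
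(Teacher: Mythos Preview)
The paper does not prove this lemma at all: it is stated with attribution to Adams \cite[Theorem 7.4]{Ad} and then used as a black box in the definition of $q_{8k+2}$. There is no argument in the paper to compare your proposal against.

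As an independent sketch, your Steps~1 and~2 are sound: the relation $(1-\gamma)^2=2(1-\gamma)$ follows immediately from $\gamma^2=1$, and generation by $\nu$ via the Atiyah--Hirzebruch spectral sequence (or the cell-by-cell cofiber sequences) is standard. Step~3, which you rightly flag as the real content, is not yet a proof. The observation $\psi^3(\nu)=\nu$ does not by itself produce a lower bound on the order of $\nu$; one needs something that detects nontriviality, and in Adams' original argument this comes not from $\psi^3$ acting on $\widetilde{KO}({\bf R}P^n)$ directly but rather from complexification to $\widetilde{K}({\bf R}P^n)$ (whose order is computed separately) together with an explicit Clifford-module construction showing $2^{\phi(n)}\nu=0$. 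Your appeal to eigenvalues of $\psi^3$ on the sphere quotients and to the $e$-invariant is closer to Adams' later $J(X)$ papers than to \cite{Ad}, and in any case the passage from ``eigenvalues $3^{4j}$ on the associated graded'' to ``order exactly $2^{4k+2}$'' requires solving the extension problems, which is precisely the work you have not done. If you want a self-contained proof here you must fill that in; otherwise the bare citation, as in the paper, is appropriate.
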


By this Lemma, any element $\alpha$ of $KO({\bf R}P^{8k+2})$ can be represented as
\begin{align}\label{3.14}
\alpha=m_\alpha+n_\alpha(1-\gamma),\  \ m_\alpha, \ n_\alpha\in{\bf Z},\ \ \ 0\leq n_\alpha\leq 2^{4k+2}-1.
\end{align}

Let $q_{8k+2}:KO({\bf R}P^{8k+2})\rightarrow {\bf Z}\{\frac{1}{2^{4k+2}}\}/2{\bf Z}$ be the homomorphism defined by
\begin{align}\label{3.15}
q_{8k+2}(\alpha)=\frac{m_\alpha}{2^{4k+2}} +\frac{n_\alpha}{2^{4k+1}}\ \ \ ({\rm mod}\ 2).
\end{align}

We can now give our definition of the topological index.

\begin{defn}\label{t3.3}
Let $E$ be a real vector bundle over an $8k+2$ dimensional compact pin$^-$ manifold $B$. The topological index of $E$, denoted by ${\rm ind}^t(E)$, is an element in ${\bf Z}\{\frac{1}{2^{4k+2}}\}/2{\bf Z}$ given by $q_{8k+2}(((i_{8k+2,m})_!)^{-1}h_!(E))$. 
\end{defn}

\begin{rem}\label{t3.4}
Using the standard method in $KO$-theory and the fact that the induced vector bundles are independent of the homotopy type of the induced maps, one sees easily that ${\rm ind}^t(E)$ does not depend on the interger $m$, the classifying map $f$ and the embedding $g$ appeared in the process of it's definition. Thus ${\rm ind}^t(E)$ is a well defined object.
\end{rem}

\begin{rem}\label{t3.5}
Similarly, one can also define the topological index for real vector bundles over an $8k+6$ dimensional compact pin$^-$ manifolds. In this case the value of the index will lie in ${\bf Z}\{\frac{1}{2^{4k+4}}\}/{\bf Z}$. 
\end{rem}

\section{A mod 2 index theorem for $8k+2$ dimensional pin$^-$ manifolds
}\label{s4} 

In this Section, we establish a mod 2 index theorem for real vector bundles over $8k+2$ dimensional pin$^-$ manifolds, which is the main concern of this paper. The proof relies on a Riemann-Roch formula for the analytic index which we state in Theorem \ref{t4.1}. The proof of this Riemann-Roch property will be carried out in Section 5.

This Section is organized as follows. In a), we state the Riemann-Roch formula to be proved in Section 5. In b), we will state and prove the mod 2 index theorem.

$\ $

{\it a). A Riemann-Roch formula for the analytic index}

Let $X,\ Y$ be two compact pin$^-$ manifolds of dimension $8m+2$, $8n+2$ respectively, such that there is an embedding $i:Y\hookrightarrow X$.

We make the assumption that
\begin{align}\label{4.1}
i^*w_1(TX)=w_1(TY).
\end{align}

Let $N$ be the normal bundle to $Y$ in $X$. By (\ref{4.1}), $N$ is an orientable spin vector bundle of dimension $8(m-n)$. We fix an orientation on $N$. Then $N$ carries a spin structure canonically induced from the given pin$^-$ structures on $TX$ and $TY$.

We can then apply the direct image construction in Section 3a).

The following Riemann-Roch type formula is essential.

\begin{thm}\label{t4.1}
Let $E$ be a real vector bundle over $Y$. Then the following identity holds,
\begin{align}\label{4.2}
{\rm ind}^a(E)={\rm ind}^a\left(i_!E\right).
\end{align}
\end{thm}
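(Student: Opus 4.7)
The plan is to follow the argument of Bismut-Zhang \cite{BZ} for the Riemann-Roch property of reduced $\eta$ invariants, the present statement being a real, non-orientable analogue of their complex formulas. The starting input is the geometric realization of the direct image from Section 3a: $i_!E$ is represented on $X$ by real bundles $\xi_\pm$ together with a bundle map $v:\xi_+\to\xi_-$ that is invertible off $Y$ and takes the Clifford form $\widetilde c(n)\oplus\mathrm{Id}_{\pi^*G}$ on the tubular neighborhood $D_1(N)$. Since $\mathrm{ind}^a$ is a homomorphism on $KO(X)$, the identity (\ref{4.2}) reduces to
\begin{align*}
\overline{\eta}\bigl(\widetilde D_{X,\xi_+}\bigr) - \overline{\eta}\bigl(\widetilde D_{X,\xi_-}\bigr) \;\equiv\; \overline{\eta}\bigl(\widetilde D_{Y,E}\bigr) \pmod{2}.
\end{align*}

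The main construction is a Bismut-Quillen superconnection deformation built from $v$. Form the super-bundle $\xi=\xi_+\oplus\xi_-$ with the grading $\tau^*$ of Section 1b applied to the $N$-spinors, define the odd self-adjoint endomorphism $V=\bigl(\begin{smallmatrix} 0 & v^* \\ v & 0 \end{smallmatrix}\bigr)$ (after cutting $v$ off outside a tubular neighborhood of $Y$), and consider the one-parameter family of self-adjoint twisted Dirac operators $A_T=\widetilde D_{X,\xi}+T V$, $T\geq 0$. The $\tau^*$-graded reduced eta of $A_0$ equals precisely the left-hand side of the display above, and a variation-formula argument shows this graded eta is independent of $T$ modulo $\mathbb Z$. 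The analytic heart is the Bismut-Lebeau \cite{BL} localization as implemented in \cite{BZ}: as $T\to\infty$ the low-lying spectrum of $A_T$ concentrates in a tubular neighborhood of $Y$; in the normal fibers the pair $(\widetilde c(n),T\cdot)$ becomes, after rescaling, a real harmonic oscillator whose one-dimensional ground state is the spinor vacuum on $N$ twisted by $E|_Y$, while the tangential operator reduces exactly to $\widetilde D_{Y,E}$. This yields the Riemann-Roch identity modulo $\mathbb Z$.

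The hard step will be upgrading the congruence from mod $\mathbb Z$ to the required mod $2$. In the complex setting of \cite{BZ} only a mod $\mathbb Z$ identity is natural. Here, however, the tangential pin$^-$ representation of Section 1a combined with the spin structure on the normal bundle $N$ (whose rank is divisible by $8$) endows the entire family $A_T$ with a quaternionic symmetry, so that every spectral crossing at zero along $T\in[0,\infty)$ is doubled and every local variation integrand produces an even integer, exactly paralleling the integrality argument of Proposition \ref{t2.7}. Verifying that this quaternionic structure is preserved by the cutoff of $v$, the deformation $V$, and the Bismut-Lebeau rescaling is the one genuinely new ingredient beyond \cite{BZ}, and is what promotes the final identity into $\mathbb R/2\mathbb Z$.
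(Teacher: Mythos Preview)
Your proposal is correct and follows essentially the same approach as the paper: reduce to the $\overline\eta$-identity (Theorem~\ref{t5.1}), deform by $TV$, and run the Bismut--Zhang closed one-form/contour argument, with the quaternionic ($\mathbf{H}$-linear) structure of $S_+(X)$, $S_+(Y)$ and $\widetilde V$ promoting every mod~$\mathbb Z$ step to mod~$2\mathbb Z$. Two small corrections: no cutoff of $v$ is needed, since the direct-image construction of Section~3a already yields $v$ globally on $X$ (invertible off $Y$, equal to $\widetilde c(n)\oplus\mathrm{Id}$ on $D_1(N)$); and the paper singles out the \emph{real} harmonic oscillator computation on the $8l$-dimensional normal fibers (Propositions~\ref{t5.10}--\ref{t5.12}, Theorem~\ref{t5.13}) as the genuinely new ingredient beyond \cite{BZ}, while the $\mathbf{H}$-linearity of $\widetilde V$ is treated as essentially automatic.
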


\begin{proof}
The proof of (\ref{4.2}) will be given in Section 5. 
\end{proof}

$\ $

{\it b). An equality between the analytic and topological indices}

We now use the notation as in Section 3a).

Again, $B$ is a compact $8k+2$ dimensional pin$^-$ manifold and $E$ is a real vector bundle over $B$.

Recall that we have constructed various direct images in the course of the definition of the topological index.

By using Theorem \ref{t4.1} two times, we have the following equality between analytic indices,
\begin{align}\label{4.3}
{\rm ind}^a(E)={\rm ind}^a\left(\left(\left(i_{8k+2,m}\right)_!\right)^{-1}h_!E\right). 
\end{align}

So in order to prove an equality between the analytic and topological indices, we need only to check it on the real projective spaces. 

\begin{lemma}\label{t4.2}
Let $\alpha$ be a real vector bundle over the pin$^-$ manifold ${\bf R}P^{8k+2}$. Then the following identity holds,
\begin{align}\label{4.4}
\overline\eta\left(\widetilde D_{{\bf R}P^{8k+2},\alpha}\right)\equiv q_{8k+2}(\alpha)\ \ \ ({\rm mod}\ 2).
\end{align}
\end{lemma}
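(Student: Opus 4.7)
The plan is to reduce (4.4) to a small number of explicit computations on generators of $KO({\bf R}P^{8k+2})$ and then evaluate these by the equivariant Atiyah--Patodi--Singer theorem applied upstairs on the disk.

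First, I would note that both sides of (4.4) descend to group homomorphisms $KO({\bf R}P^{8k+2})\to {\bf R}/2{\bf Z}$: the left side by the discussion immediately following Proposition \ref{t2.7} and Definition \ref{t2.8}, and the right side by the $\bf Z$-linearity of $q_{8k+2}$ in the coordinates $(m_\alpha,n_\alpha)$. By Lemma \ref{t3.2},
\begin{equation*}
KO({\bf R}P^{8k+2})\;=\;{\bf Z}\cdot[{\bf R}]\;\oplus\;({\bf Z}/2^{4k+2}{\bf Z})\cdot(1-\gamma),
\end{equation*}
so it is enough to verify (4.4) on the two generators $\alpha={\bf R}$ and $\alpha=1-\gamma$; that is, to establish
\begin{align*}
\overline\eta\bigl(\widetilde D_{{\bf R}P^{8k+2}}\bigr)&\equiv \tfrac{1}{2^{4k+2}}\pmod 2,\\
\overline\eta\bigl(\widetilde D_{{\bf R}P^{8k+2},1-\gamma}\bigr)&\equiv \tfrac{1}{2^{4k+1}}\pmod 2.
\end{align*}

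Next, I would realise both quantities via the ${\bf Z}/2$-equivariant picture of Section 3b. The disk $D^{8k+3}$ with antipodal involution has $S^{8k+2}$ as boundary on which the ${\bf Z}/2$-action is free with quotient ${\bf R}P^{8k+2}$, and the pin$^-$ structure on ${\bf R}P^{8k+2}$ is by construction the descent of an equivariant pin$^-$ structure on $D^{8k+3}$ in which the generator of ${\bf Z}/2$ acts on the pin$^-$ frame by left multiplication by $s_{4k+3}=e_1\cdots e_{4k+3}$. The trivial bundle and $\gamma=o(T{\bf R}P^{8k+2})$ both extend to equivariant real bundles $\widehat\alpha$ over $D^{8k+3}$ (the trivial one tautologically, and $\gamma$ as the sign representation on $D^{8k+3}\times{\bf R}$). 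Applying the equivariant APS theorem to $D_{D^{8k+3},\widehat\alpha}$ and averaging over ${\bf Z}/2$ gives
\begin{equation*}
-\overline\eta\bigl(\widetilde D_{{\bf R}P^{8k+2},\alpha}\bigr)\;\equiv\; \tfrac{1}{2}\bigl(\Ind D_{D^{8k+3},\widehat\alpha}+L\bigl(s_{4k+3};D^{8k+3},\widehat\alpha\bigr)\bigr)\pmod{2{\bf Z}},
\end{equation*}
where the first term vanishes because $\dim D^{8k+3}$ is odd (the local term in (\ref{2.6}) is absent), and the second is the equivariant Lefschetz contribution at the unique interior fixed point $0\in D^{8k+3}$.

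Finally, I would compute this fixed-point contribution. At the origin $-\Id$ acts on $T_0D^{8k+3}={\bf R}^{8k+3}$, so $\det(1-(-\Id))=2^{8k+3}$; the lift $s_{4k+3}$ acts on the spinor fiber $S_+(D^{8k+3})_0$ giving a factor $\tr_{S_+}(s_{4k+3})$ which, after matching phases via the quaternionic structure of Section 1a, contributes $\pm 2^{4k+1}$. For $\alpha={\bf R}$ the coefficient fiber contributes $1$ and the ratio is $\pm 1/2^{4k+2}$; for $\alpha=\gamma$ the equivariant fiber at $0$ carries the non-trivial sign, so the contribution picks up an extra $-1$, and subtracting produces exactly $1/2^{4k+1}$ for $1-\gamma$. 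This matches the two identities above.

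The main obstacle is the bookkeeping in the Lefschetz calculation: one must track the quaternionic structure of $S_+(D^{8k+3})$, verify that the complex phases appearing in $\tr_{S_+}(s_{4k+3})$ cancel against the conjugate eigenvalues of $-\Id$ on ${\bf R}^{8k+3}$ to give a real answer, and pin down the overall sign and power of $2$ so that the identities for $\alpha={\bf R}$ and $\alpha=\gamma$ are compatible (so that their difference really gives $1/2^{4k+1}$ for $1-\gamma$). Auxiliary to this is checking that the ${\bf Z}/2$-averaged $\eta$-invariant of the boundary operator on $S^{8k+2}$ drops out of the averaging step, which should follow either from spectral symmetry on the round sphere or from an equivariant version of the cobordism argument behind Proposition \ref{t2.7}.
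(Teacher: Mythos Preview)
Your approach is essentially the one the paper itself invokes: reduce via Lemma~\ref{t3.2} to the generators $1$ and $\gamma$ (equivalently $1$ and $1-\gamma$), and then compute the two reduced $\eta$-invariants by the ${\bf Z}/2$-equivariant Atiyah--Patodi--Singer/Donnelly theorem on the bounding disk, exactly as in Stolz \cite[Corollary~5.4]{St} with pin$^+$ replaced by pin$^-$ and $8k+4$ by $8k+2$. The paper's proof is nothing more than a pointer to this computation, so your proposal is aligned with it; the only slips are notational (the lift should be by $s_{8k+3}=e_1\cdots e_{8k+3}$, not $s_{4k+3}$) and a small imprecision in your displayed identity: the term $\Ind D_{D^{8k+3},\widehat\alpha}$ does not vanish because the dimension is odd---what vanishes is the \emph{local} $\widehat A$-integral, so that $\Ind_1=-\overline\eta_1(S^{8k+2})$, and it is only after combining this with $\Ind_g=L-\overline\eta_g(S^{8k+2})$ and using $\overline\eta({\bf R}P^{8k+2})=\tfrac12(\overline\eta_1+\overline\eta_g)$ that the integer contribution $\tfrac12(\Ind_1+\Ind_g)$ drops out mod~$2{\bf Z}$ by the quaternionic structure, leaving $\tfrac12 L$ as you intend.
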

\begin{proof}
By Lemma \ref{t3.2}, we need only to check (\ref{4.4}) for $\alpha=1$ and $\alpha=\gamma$. This can be done in exactly the same way as in Gilkey \cite{G} and Stolz \cite{St}. All what one need to do is the trivial modification of \cite[Corollary 5.4]{St} by replacing `pin$^+$' there by `pin$^-$', and by replacing `${\bf R}P^{8k+4}$' there by `${\bf R}P^{8k+2}$'.

We leave the details to the interested reader.
\end{proof}

We can now state the main result of this paper.

\begin{thm}\label{t4.3}
The following identity holds for a real vector bundle $E$ over a compact $8k+2$ dimensional pin$^-$ manifold,
\begin{align}\label{4.5}
{\rm ind}^a(E)={\rm ind}^t(E).
\end{align}
\end{thm}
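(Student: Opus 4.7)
The plan is to deduce Theorem~\ref{t4.3} by combining the Riemann--Roch formula of Theorem~\ref{t4.1} (applied twice) with the explicit evaluation on real projective space furnished by Lemma~\ref{t4.2}. This is precisely equation~(\ref{4.3}) paired with Lemma~\ref{t4.2}; the bulk of the analytical work is already absorbed into Theorem~\ref{t4.1}.

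First, I apply Theorem~\ref{t4.1} to the embedding $h=(f,g)\colon B\hookrightarrow {\bf R}P^{8k+2}\times S^{8m}$ of~(\ref{3.10}). The hypothesis~(\ref{4.1}) must be checked: since $S^{8m}$ is orientable, $w_1(T({\bf R}P^{8k+2}\times S^{8m}))$ is the pullback of $w_1(T{\bf R}P^{8k+2})$ from the first factor, so by the defining property~(\ref{3.9}) of $f$ we obtain $h^*w_1(T({\bf R}P^{8k+2}\times S^{8m}))=f^*w_1(T{\bf R}P^{8k+2})=w_1(TB)$. Theorem~\ref{t4.1} then yields ${\rm ind}^a(E)={\rm ind}^a(h_!E)$. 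Next, I apply Theorem~\ref{t4.1} a second time to $i_{8k+2,m}\colon {\bf R}P^{8k+2}\hookrightarrow {\bf R}P^{8k+2}\times S^{8m}$; the hypothesis holds trivially because $i_{8k+2,m}^*$ of $w_1$ of the target equals $w_1(T{\bf R}P^{8k+2})$. Setting $\alpha=((i_{8k+2,m})_!)^{-1}h_!E\in KO({\bf R}P^{8k+2})$, Theorem~\ref{t4.1} gives ${\rm ind}^a(\alpha)={\rm ind}^a((i_{8k+2,m})_!\alpha)={\rm ind}^a(h_!E)$. Chaining these equalities reproduces~(\ref{4.3}), namely ${\rm ind}^a(E)={\rm ind}^a(\alpha)$.

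To finish, I invoke Lemma~\ref{t4.2}. Because ${\rm ind}^a$ is a homomorphism $KO({\bf R}P^{8k+2})\to \R/2\Z$, and since by Lemma~\ref{t3.2} every class in $KO({\bf R}P^{8k+2})$ can be written as $m_\alpha+n_\alpha(1-\gamma)$, the identity ${\rm ind}^a\equiv q_{8k+2}\pmod 2$ verified on the two generators $1$ and $\gamma$ extends $\Z$-linearly to all of $KO({\bf R}P^{8k+2})$. Applying this to $\alpha$ yields ${\rm ind}^a(\alpha)\equiv q_{8k+2}(\alpha)\pmod 2$, which by Definition~\ref{t3.3} is precisely ${\rm ind}^t(E)$. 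Hence ${\rm ind}^a(E)={\rm ind}^t(E)$.

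The proof of Theorem~\ref{t4.3} is therefore a short assembly once Theorem~\ref{t4.1} and Lemma~\ref{t4.2} are in hand. The real obstacle lies in Theorem~\ref{t4.1}, whose proof (deferred to Section~5) requires the full strength of the local index techniques of Bismut--Zhang, and through them of Bismut--Lebeau, as emphasized in the introduction; against that background, the present theorem is an immediate corollary.
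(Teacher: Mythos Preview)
Your proposal is correct and follows essentially the same route as the paper: apply Theorem~\ref{t4.1} twice to obtain~(\ref{4.3}), then invoke Lemma~\ref{t4.2} and Definition~\ref{t3.3}. The paper additionally records the consistency check~(\ref{4.6}) (that $q_{8k+2}(\alpha)={\rm ind}^t(\alpha)$ on ${\bf R}P^{8k+2}$, proved by homotoping $g$ to a constant map), but as your argument makes clear this step is not logically required once one observes that $q_{8k+2}\bigl(((i_{8k+2,m})_!)^{-1}h_!E\bigr)$ is ${\rm ind}^t(E)$ by definition.
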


\begin{proof}
By (\ref{4.3}) and Lemma \ref{t4.2}, we need finally to check that for any real vector bundle $\alpha$ over ${\bf R}P^{8k+2}$, we have
\begin{align}\label{4.6}
q_{8k+2}(\alpha)={\rm ind}^t(\alpha).
\end{align}

In fact, if we take $B$ in Section 3a) to be the ${\bf R}P^{8k+2}$ and the classifying map to be the identity, then by deforming the embedding $g:{\bf R}P^{8k+2}\hookrightarrow S^{8m}$ to the constant map $g_p$, $h=(f,g)$ deformes to an embedding $h_p=(f,g_p):{\bf R}P^{8k+2}\hookrightarrow {\bf R}P^{8k+2}\times S^{8m}$. In particular, the deformed maps in this process from $h$ to $h_p$ remain to be embeddings. Thus, we have
\begin{align}\label{4.7}
h_!(\alpha)=\left(h_p\right)_!(\alpha)=\left(i_{8k+2,m}\right)_!(\alpha).
\end{align}

The proof of Theorem \ref{t4.3} is completed. 
\end{proof}

The following consequence is of independent interest.

\begin{cor}\label{t4.5}
For any real vector bundle $E$ over a compact $8k+2$ dimensional pin$^-$ manifold $B$, if $\widetilde D_{B,E}$ is a twisted Dirac operator defined by using suitable metrics and connections on $B$, $E$ respectively, then one has $\overline\eta(\widetilde D_{B,E})\in{\bf Z}\{\frac{1}{2^{4k+2}}\}$.
\end{cor}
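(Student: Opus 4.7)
The plan is to read Corollary \ref{t4.5} directly off Theorem \ref{t4.3}. Recall that by Definition \ref{t3.3} and formula (\ref{3.15}), the topological index ${\rm ind}^t(E)$ lives in the subgroup ${\bf Z}\{\frac{1}{2^{4k+2}}\}/2{\bf Z}$ of ${\bf R}/2{\bf Z}$; the denominator $2^{4k+2}$ is built in through the order of $\widetilde{KO}({\bf R}P^{8k+2})$ given by Lemma \ref{t3.2}. The corollary is then essentially a matter of transferring this integrality from the topological side to the analytic side.

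So I would fix suitable metrics and connections on $B$ and $E$, form the associated twisted Dirac operator $\widetilde D_{B,E}$, and appeal to the chain of identifications ${\rm ind}^a(E) = \overline\eta(\widetilde D_{B,E}) \pmod 2 = {\rm ind}^t(E)$, where the first equality is Definition \ref{t2.8} and the second is Theorem \ref{t4.3}. Picking a representative $r \in {\bf Z}\{\frac{1}{2^{4k+2}}\}$ of ${\rm ind}^t(E)$ then yields $\overline\eta(\widetilde D_{B,E}) - r \in 2{\bf Z}$. Since $2{\bf Z} \subset {\bf Z}\{\frac{1}{2^{4k+2}}\}$ (every integer is a ${\bf Z}$-multiple of $\frac{1}{2^{4k+2}}$), adding $r$ back gives $\overline\eta(\widetilde D_{B,E}) \in {\bf Z}\{\frac{1}{2^{4k+2}}\}$, as required.

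There is no genuine obstacle at this stage, since the entire arithmetic content has been absorbed into Theorem \ref{t4.3}. The one minor point to verify is that the conclusion is independent of the admissible choice of metrics and connections, and this follows from the remark immediately after Proposition \ref{t2.7}: different choices alter $\overline\eta(\widetilde D_{B,E})$ only by an even integer, which already lies in ${\bf Z}\{\frac{1}{2^{4k+2}}\}$.
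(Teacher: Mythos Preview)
Your argument is correct and is precisely the intended one: the paper presents Corollary \ref{t4.5} as an immediate consequence of Theorem \ref{t4.3} without further proof, and your derivation via ${\rm ind}^a(E)={\rm ind}^t(E)\in{\bf Z}\{\frac{1}{2^{4k+2}}\}/2{\bf Z}$ is exactly how it follows.
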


This extends a result of Gilkey \cite{G}.

\begin{rem}\label{t4.6}
There is an analogous mod ${\bf Z} $ index theorem for real vector bundles over $8k+6$ dimensional pin$^-$ manifolds. Details are easy to carry out and are left to the interested reader.
\end{rem}

\section{A Riemann-Roch formula for pin$^-$ manifolds
}\label{s5} 

The purpose of this Section is to prove Theorem \ref{t4.1}. Recall that a similar result for $\eta$ invariants on odd dimensional spin manifolds has already been proved in Bismut-Zhang \cite{BZ}. All what we need to do is to modify the arguments in \cite{BZ} to fit with our specific situation here.

We recall that the techniques in \cite{BZ} depends heavily on the paper of Bismut-Lebeau \cite{BL}.

This Section is organized as follows. In a), we restate Theorem \ref{t4.1} in terms of reduced $\eta$ invariants. In b), we employ some simplifying assumptions on certain metrics and connections. In c), we state six technical results. The Riemann-Roch property is proved in d), based on the intermediary results in c). These intermediary results are then proved in e).

$\ $

{\it a). A Riemann-Roch formula for reduced $\eta$ invariants}

Let $i:Y\hookrightarrow X$ be an embedding of a pair of compact pin$^-$ manifolds of dimensions $8m+2$ and $8n+2$ respectively.

As in Section 4a), we make the assumption that
\begin{align}\label{5.1}
i^*w_1(TX)=w_1(TY).
\end{align}

Let $\pi:N\rightarrow Y$  be the normal bundle to $Y$ in $X$. From (\ref{5.1}), one sees easily that $N$ is an orientable  spin bundle over $Y$ (compair with (\ref{3.11}) and (\ref{3.12})). 
We fix an orientation on $N$. Then the pin$^-$ structures on $TX$ and $TY$ determine a spin structure on $N$. And we can apply the direct image construction of Section 3a) to real vector bundles over $Y$.

We will use the notation of Section 3a).

Let $E$ be a real vector bundle over $Y$.

Recall that in Section 3a), starting with a metric  $g^{TX}$ on $TX$, we constructed two real vector bundles $\xi_\pm$ of a same dimension on $X$ such that $\xi_+-\xi_-=i_!E\in\widetilde {KO}(X)$. 

Let $g^{TY}$ be the restriction of $g^{TX}$ on $TY$.

We introduce metrics and Euclidean connections on $E$ and $\xi_\pm$ respectively.

The main result of this Section can be stated as follows for the twisted Dirac operators constructed in Section 2.

\begin{thm}\label{t5.1}
The following identity holds,
\begin{align}\label{5.2}
\overline\eta\left(\widetilde D_{X,\xi_+}\right)-\overline\eta\left(\widetilde D_{X,\xi_-}\right)
\equiv \overline\eta\left(\widetilde D_{Y,E}\right)\ \ ({\rm mod}\ 2).
\end{align}
\end{thm}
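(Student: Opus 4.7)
The plan is to run the Bismut-Zhang Riemann-Roch argument for reduced $\eta$ invariants \cite{BZ}, which is built on the adiabatic heat-kernel analysis of Bismut-Lebeau \cite{BL}, in the pin$^-$ context, and then refine its $\pmod{\mathbb{Z}}$ conclusion to $\pmod{2}$. The crucial geometric input is the explicit form of $i_!E$ from Section 3a): we have real bundles $\xi_\pm$ on $X$ and a bundle map $v\colon\xi_+\to\xi_-$ which near $Y$ is $\widetilde c(n)\oplus\mathrm{Id}_{\pi^*G}$ and is invertible away from $Y$. Choose metrics and Euclidean connections on $\xi_\pm$ of product type near $Y$, and form the $\mathbb{Z}_2$-graded bundle $\xi_+\oplus\xi_-$ on $X$ with grading $\tau$.

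Following \cite[\S 2--3]{BZ}, I would introduce the Quillen-type superconnection family
\[
\mathbb{B}_T \;=\; \widetilde D_{X,\xi_+\oplus\xi_-} \;+\; T\,V, \qquad V = \begin{pmatrix} 0 & v^* \\ v & 0 \end{pmatrix},
\]
on $\Gamma(S_+(X)\otimes(\xi_+\oplus\xi_-))$, and study its associated (graded) reduced $\eta$ invariant as $T$ varies from $0$ to $\infty$. At $T=0$ the operator decouples and the graded $\eta$ recovers $\overline\eta(\widetilde D_{X,\xi_+})-\overline\eta(\widetilde D_{X,\xi_-})$. As $T\to\infty$, the quadratic term $T^2V^2$ pushes eigenmodes off the zero section and kills the auxiliary trivial factor $G$; the Bismut-Lebeau heat-kernel estimates show that the low-lying spectrum concentrates in a tubular neighbourhood of $Y$ and converges to the spectrum of $\widetilde D_{Y,E}$, the normal harmonic-oscillator ground state, via the spinor identity (\ref{1.11}), picking out $S_+(Y)\otimes E$ inside $S_+(X)|_Y\otimes F^*(N)\otimes E$. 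The variation of $\overline\eta(\mathbb{B}_T)$ over $T\in[0,\infty)$ is a sum of integer spectral flow through zero plus the integral of an explicit local Chern-Weil density on $X$, which by the radial computation of \cite[\S 2]{BZ} equals the Thom-class representative of $N$ paired with the appropriate real characteristic form of $E$. Together these give the Riemann-Roch identity $\overline\eta(\widetilde D_{X,\xi_+})-\overline\eta(\widetilde D_{X,\xi_-})\equiv\overline\eta(\widetilde D_{Y,E})\pmod{\mathbb{Z}}$.

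The hard part, and the genuinely new content relative to \cite{BZ}, is upgrading this congruence to $\pmod{2}$. Two parities must be verified: the integer spectral flow of the family through $0$ is even, and the integrated local density is an even integer. Both rest on the quaternionic structure on $S_+$ in the ambient dimension $8m+3$ recalled in Section 1a): the entire family $\mathbb{B}_T$ is $\mathbf{H}$-linear, because the base Dirac operator commutes with the $\mathbf{H}$-action on $S_+$ and the extra term $V$ acts only on the $\xi$-factor through $\widetilde c$, which is itself $\mathbf{H}$-linear on $F^*(N)\otimes E$. Consequently non-zero eigenvalues of $\mathbb{B}_T$ occur in quaternionic quadruples, forcing each level crossing at $0$ to contribute a multiple of $4$ to the spectral flow; and the local Chern-Weil density, being the characteristic form of an $\mathbf{H}$-linear family paired with the orientation bundle $o(TX)$ that enters Definition \ref{t2.4}, evaluates to a real characteristic number of $N$ and $E$ which the $\mathbf{H}$-structure forces into $2\mathbb{Z}$. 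Carrying out these two parity verifications rigorously, and adapting the Bismut-Lebeau heat-kernel estimates to accommodate the non-orientability of $X$ and the twist by $o(TX)$, is the principal task; the remainder is a near-verbatim transcription of \cite{BZ}.
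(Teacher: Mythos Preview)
Your strategy is the paper's: deform $\widetilde D_{X,\xi}$ by $T\widetilde V$ with $V=v+v^*$, invoke the Bismut--Lebeau localization as $T\to\infty$ onto $\widetilde D_{Y,E}$ via $S_+(X)|_Y=S_+(Y)\otimes F(N)$ and the normal harmonic oscillator, and upgrade from $\pmod{{\bf Z}}$ to $\pmod 2$ through the ${\bf H}$-linearity of all operators involved. The paper packages the deformation as a closed $1$-form $\beta_{u,T}$ on ${\bf R}_+^*\times{\bf R}_+$ integrated over a rectangular contour, with six heat-kernel estimates (Theorems~\ref{t5.3}--\ref{t5.8}) replacing \cite[Theorems 3.7--3.12]{BZ}, but the analytic content is what you describe.

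One point in your sketch is off and would mislead you if you tried to carry it out as written. You assert that the smooth variation of $\overline\eta(\mathbb{B}_T)$ integrates to a Thom-class pairing which the ${\bf H}$-structure then forces into $2{\bf Z}$. In fact there is no such local contribution to control: the $u\to 0$ side of the contour vanishes identically (Theorem~\ref{t5.7}, together with the uniform bounds of Theorems~\ref{t5.6} and~\ref{t5.8}), and this is a purely local heat-kernel computation having nothing to do with the quaternionic structure---it holds equally in the complex setting of \cite{BZ}. The ${\bf H}$-linearity enters \emph{only} where integers appear as dimensions of eigenspaces: the spectral-flow jumps of $\widetilde D_{X,\xi,T}$, the term $\dim\ker\widetilde D_{X,\xi_-}$ in passing from (\ref{5.10}) to (\ref{5.11}), and the analogous $\pmod{{\bf Z}}$ ambiguities in \cite[\S 3e)--g)]{BZ}. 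These are all real dimensions of ${\bf H}$-modules, hence even. A quaternionic structure on the spinor bundle does not in general force characteristic numbers of the coefficient bundles $N$ and $E$ to be even, and no such argument is needed here.
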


$\ $

{\it b). Some geometric simplifying assumptions}

Recall  that by Proposition \ref{t2.7}, the reduced $\eta$ invariants in (\ref{5.2}) do not depend on the metrics and connections used to define the twisted Dirac operators. So in order to prove Theorem \ref{t5.1}, we can and we will make these metrics and connections as simple as possible.

First of all, we assume that the embedding $i:(Y,g^{TY})\hookrightarrow (X,g^{TX})$ is totally geodesic. 

Let $g^N$ be the metric on $N$ so that we have the orthogonal decomposition of vector bundles and metrics
\begin{align}\label{5.3}
TX|_Y=TY\oplus N,\ \ \ \ \left.g^{TX}\right|_Y=g^{TY}\oplus g^N.
\end{align}
Then $g^N$ lifts to metrics $g^{F_\pm(N)}$, $g^{F_\pm^*(N)}$ on $F_\pm(N)$, $F_\pm^*(N)$ respectively. 

Let $P^{TY}$, $P^N$ be the orthogonal projection maps from $TX$ to $TY$ and $N$ respectively.

Let $\nabla^{TX}$, $\nabla^{TY}$ be the Levi-Civita connections associated to $g^{TX}$, $g^{TY}$ respectively. Then
\begin{align}\label{5.4}
\nabla^{TY}=P^{TY}\nabla^{TX|_Y}P^{TY}.
\end{align}

Let $\nabla^N$ be the connection defined by 
\begin{align}\label{5.5}
\nabla^N=P^N\nabla^{TX|_Y}P^N.
\end{align}
Then $\nabla^N$ is a Euclidean connection on $N$. It lifts to Euclidean connections $\nabla^{F_\pm(N)}$ and $\nabla^{F_\pm^*(N)}$ on $F_\pm(N)$ and $F_\pm^*(N)$ accordingly. 

Let $g^G$ be a metric on the vector bundle $G$ appeared in the construction of $\xi_\pm$ in Section 3a). Let $\nabla^G$ be a Euclidean connection on $G$. Then  $\xi_\pm|_Y=F_\pm^*(N)\otimes E\oplus G$ carry the metrics $g^{F_\pm^*(N)\otimes g^E}\oplus g^G$ such that $F_\pm^*(N)\otimes E$ and $G$ are orthogonal to each other, and the corresponding connections $\nabla^{F_\pm^*(N)\otimes E}\oplus\nabla^G$. We can and we do lift these metrics and connections to $\pi^*(F_\pm^*(N)\otimes E\oplus G)|_{D_r(N)}$ and then extend them to metrics $g^{\xi_\pm}$ and Euclidean connections $\nabla^{\xi_\pm}$ on $\xi_\pm$. 

Let $\xi=\xi_+\oplus\xi_-$ be the ${\bf Z}_2$-graded vector bundle. Let $g^\xi=g^{\xi_+}\oplus g^{\xi_-}$ be the metric on $\xi$ so that $\xi_+$ and $\xi_-$ are orthogonal to each other. Let $\nabla^\xi=\nabla^{\xi_+}\oplus\nabla^{\xi_-}$ be the corresponding Euclidean connection on $\xi$.

Then on $D_1(N)$, one has
\begin{align}\label{5.6}
\xi=\pi^*\left( F_+^*(N)\otimes E\oplus G\right) \oplus \pi^*\left(F_-^*(N)\otimes E\oplus G\right),
\end{align}
$$g^\xi=\pi^*\left(g^{F_+^*(N)}\otimes g^E\oplus g^G\right) \oplus \pi^*\left(g^{F_-^*(N)}\otimes g^E\oplus g^G\right) ,$$
$$\nabla^\xi=\pi^*\left(\nabla^{F_+^*(N)\otimes E}\oplus \nabla^G\right) \oplus \pi^*\left(\nabla^{F_-^*(N)\otimes E} \oplus \nabla^G\right) .$$
Furthermore, by Section 3a), there is a map $v:\xi_+\rightarrow \xi_-$, which is invertible on $X\setminus Y$, and, when restricted to $D_1(N)$, takes the form
\begin{align}\label{5.7}
 v(n)=\widetilde c(n) \oplus{\rm Id}_{\pi^*G},\ \ \ n\in D_1(N),
\end{align}
where $\widetilde{c}(n)$ is the Clifford action of $n$ on $\pi^*(F_+^*(N)\otimes E)$. The Clifford action also acts on $\pi^*(F_-^*(N)\times E).$

Let $\tau^N$ be the action on $F(N)$ such that $\tau^N|_{F_\pm(N)}=\pm{\rm Id}_{F_\pm(N)}$. 
Let $\tau^{N*}$ be the transpose of $\tau^N$. Then $\tau^{N*}$ extends to an action on $F^*(N)\otimes E$ as $\tau^{N*}\otimes {\rm Id}_E$ which we still note by $\tau^{N*}$. 

Let $v^*$ be the adjoint of $v$ with respect to $g^\xi$. Set $V=v+v^*$. Then $V$ is a self-adjoint element in ${\rm End}^{\rm odd}(\xi)$, and one has on $D_1(N)$ that
\begin{align}\label{5.8}
V(n)=\tau^{N*}\widetilde c(n)+{\rm Id}_{\pi^*G}.
\end{align}

\begin{rem}\label{t5.2}
All the simplifying conditions in Bismut-Zhang \cite[Section 1c) and 2e)]{BZ} for direct images of complex vector bundles have now real analogues. Compare also with Zhang \cite[(1.1)]{Z2}.
\end{rem}

$\ $

{\it c). Six intermediary results}

We use the same assumptions and notation as in the previous two subsections. In particular, we assume that the simplifying conditions made in the last subsection hold. 

Let $\widetilde D_{X,\xi}$ be the twisted Dirac operator defined by
\begin{align}\label{5.9}
 \widetilde D_{X,\xi}=\widetilde D_{X,\xi_+}-\widetilde D_{X,\xi_-}.
\end{align}
Then
\begin{align}\label{5.10}
\overline\eta\left(  \widetilde D_{X,\xi}\right)=\overline\eta\left(\widetilde D_{X,\xi_+}\right)-\overline\eta\left(\widetilde D_{X,\xi_-}\right)+\dim \ker \widetilde D_{X,\xi_-}.
\end{align}

Now since $S_+(X)$ has a quaternionic structure, $\dim\ker\widetilde D_{X,\xi_-}$ is an even integer. From (\ref{5.10}), one gets
\begin{align}\label{5.11}
\overline\eta\left(  \widetilde D_{X,\xi}\right)\equiv \overline\eta\left(\widetilde D_{X,\xi_+}\right)-\overline\eta\left(\widetilde D_{X,\xi_-}\right) \ \ ({\rm mod}\ 2).
\end{align}

Let $\widetilde V$ be the operator acting on $\Gamma(S_+(X)\otimes\xi)$ defined by
\begin{align}\label{5.12}
\widetilde V:\alpha\otimes \beta\mapsto\alpha\otimes V\beta
\end{align}
for $\alpha\in\Gamma(S_+(X))$, $\beta\in\Gamma(\xi)$. 

Then $\widetilde V$ is a pin$^-$ equivariant self-adjoint element in ${\rm End}^{\rm odd}(S_+(X)\otimes \xi)$.

For any $T\geq 0$, set
\begin{align}\label{5.13}
\widetilde D_{X,\xi,T}=\widetilde D_{X,\xi}+T\widetilde V.
\end{align}

For $a>0$, $T\geq 0$, let $K_T^a$ be the direct sum of the eigenspaces of the operator $\widetilde D_{X,\xi,T}$ which are associated to eigenvalues whose absolute value is strictly smaller than $a$. Let $P_T^a$ be the orthogonal projection operator from $\Gamma(S_+(X)\otimes\xi)$ on $K_T^a$. Set $P_T^{a,\perp}=1-P_T^a$.

The following results, which are similar to those of Bismut-Zhang \cite[Theorems 3.7-3.12]{BZ} for odd dimensional manifolds and complex bundles case, will play essential roles in the proof of Theorem \ref{t5.1} in the next subsection. The proof of these results will be given in Section 5e).

\begin{thm}\label{t5.3}
For any $\alpha_0>0$, there exists $C>0$ such that for $\alpha\geq\alpha_0$, $T\geq 1$, then
\begin{align}\label{5.14}
\left|{\rm Tr}\left[\widetilde D_{X,\xi,T}\exp\left(-\alpha\left(\widetilde D_{X,\xi,T}\right)^2\right)\right]
-{\rm Tr}\left[\widetilde D_{Y,E}\exp\left(-\alpha\left(\widetilde D_{Y,E}\right)^2\right)\right]\right|\leq \frac{C}{T^{1/2}}.
\end{align}
\end{thm}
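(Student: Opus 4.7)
The plan is to follow the localization strategy of Bismut-Lebeau \cite{BL} and Bismut-Zhang \cite{BZ}, adapted to the present mod 2/real setting. The operator $\widetilde{D}_{X,\xi,T}^2$ expands as
\begin{align*}
\widetilde{D}_{X,\xi,T}^2 = \widetilde{D}_{X,\xi}^2 + T[\widetilde{D}_{X,\xi},\widetilde{V}]_+ + T^2\widetilde{V}^2,
\end{align*}
and since $V$ vanishes exactly on $Y$ and is nondegenerate transverse to $Y$ (by \eqref{5.8}, $V$ acts on the normal fibres as the symbol of the transverse Dirac operator), $T^2\widetilde{V}^2$ forces the heat kernel of $(\widetilde{D}_{X,\xi,T})^2$ to concentrate on $Y$ as $T\to\infty$. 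The target of this concentration is, by Clifford algebra, precisely a harmonic oscillator along the normal fibres twisted by the Dirac operator $\widetilde{D}_{Y,E}$.

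First I would use finite propagation speed together with a cutoff $\rho$ supported in a tubular neighborhood $\mathcal{U}_\varepsilon$ of $Y$ of radius $\varepsilon$ to split the trace
\begin{align*}
\tr\!\left[\widetilde{D}_{X,\xi,T}e^{-\alpha\widetilde{D}_{X,\xi,T}^2}\right] = I_T(\rho) + I_T(1-\rho).
\end{align*}
On $\supp(1-\rho)$, $\widetilde{V}^2$ is bounded below by some $c>0$, so standard Duhamel and Dunford integral representations give $|I_T(1-\rho)|\leq C e^{-c' T}$ uniformly on $\alpha\geq\alpha_0$. For $I_T(\rho)$ I would introduce normal coordinates $(y,u)$ on $\mathcal{U}_\varepsilon$ and perform the Bismut-Lebeau rescaling $u \mapsto u/\sqrt{T}$, together with the appropriate Getzler-type rescaling of the Clifford variables in the normal directions. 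Using the factorization \eqref{1.11} of the spinor representations and the product decomposition \eqref{5.6}--\eqref{5.8} for $\xi$ on $D_1(N)$, a direct expansion shows that as $T\to\infty$ the rescaled operator $T^{-1}\widetilde{D}_{X,\xi,T}^2$ converges to the sum $\widetilde{D}_{Y,E}^2 + H$, where $H$ is a fibrewise generalized harmonic oscillator acting on $\pi^*(F^*(N)\otimes E\oplus G)$. Mehler's formula identifies the supertrace of $e^{-\alpha H}$ against $V$ with a fibrewise projection onto the $\widetilde c$-lowest eigenspace, reproducing exactly the trace on $Y$ with coefficient $E$.

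The heart of the proof, and where the $T^{-1/2}$ rate must be controlled, is in verifying that the next-to-leading terms in the rescaled asymptotics are genuinely $O(T^{-1/2})$ uniformly for $\alpha\geq\alpha_0$. This requires Bismut-Lebeau's sharp resolvent and heat-kernel estimates on the model harmonic oscillator, together with estimates of the rescaled subprincipal symbol coming from $[\widetilde{D}_{X,\xi},\widetilde{V}]_+$ and from the curvature of $\nabla^N$. The $T^{-1/2}$ (rather than $T^{-1}$) comes precisely from the odd-degree component of the expansion in the normal variable $u$, which under $u\mapsto u/\sqrt{T}$ rescales by a factor $T^{-1/2}$ and survives the supertrace.

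The main obstacle is precisely the uniformity of this estimate in $\alpha\in[\alpha_0,\infty)$: one needs a priori Gaussian off-diagonal heat kernel bounds for $\widetilde{D}_{X,\xi,T}$ with constants independent of $T$, together with the Bismut-Lebeau contour-integral machinery to transfer rescaled resolvent asymptotics to heat-kernel asymptotics. Everything else is a real analogue of the arguments in \cite[Section 3]{BZ}; the only genuinely new point is keeping track of the pin$^-$ equivariance and the quaternionic structure of $S_+(X)$, which is straightforward because both $\widetilde{V}$ and the rescaling are pin$^-$ equivariant by construction.
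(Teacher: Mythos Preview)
Your proposal is correct and follows essentially the same strategy as the paper: the paper's own proof of Theorems \ref{t5.3}--\ref{t5.8} consists of the remark that the Bismut--Zhang \cite[Section 4]{BZ} / Bismut--Lebeau \cite{BL} localization and rescaling arguments go through verbatim, the only new ingredient being the computation of the real harmonic oscillator kernel (Propositions \ref{t5.10}--\ref{t5.12} and Theorem \ref{t5.13}), which is exactly the model operator $H$ you invoke. One small imprecision: $V$ does not vanish on $Y$ on the $G$-summand (by \eqref{5.8}, $V(0)=0\oplus\mathrm{Id}_{\pi^*G}$), but this is harmless since the $G$-part contributes symmetrically to $\xi_+$ and $\xi_-$ and drops out of the relevant trace; the localization is driven by $\widetilde c(n)^2=|n|^2$ on the $F^*(N)\otimes E$ summand, as you say.
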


\begin{thm}\label{t5.4}
For any $a>0$, there exist $c>0$, $C>0$ such that for $\alpha\geq 1$, $T\geq 1$, then
\begin{align}\label{5.15}
\left|{\rm Tr}\left[P_T^{a,\perp}\widetilde D_{X,\xi,T}\exp\left(-\alpha\left(\widetilde D_{X,\xi,T}\right)^2\right)\right] \right|\leq c\exp(-C\alpha).
\end{align}
\end{thm}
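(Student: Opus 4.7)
The plan is to use the spectral gap built into $P_T^{a,\perp}$ to peel off an exponentially decaying factor in $\alpha$, thereby reducing the estimate to a uniform-in-$T$ trace-norm bound that is the content of the Bismut--Lebeau heat kernel analysis. First I would note that, by the very definition of $P_T^{a,\perp}$, on its range $\widetilde D_{X,\xi,T}$ has spectrum with $|\lambda|\ge a$. For any such $\lambda$ and any $\alpha\ge 1$ one has the elementary pointwise estimate $|\lambda|\,e^{-\alpha\lambda^2}\le e^{-(\alpha-1)a^2}\,|\lambda|\,e^{-\lambda^2}$. Promoting this to a functional-calculus inequality and summing over eigenvalues with multiplicity yields
\begin{align*}
\Bigl|{\rm Tr}\bigl[P_T^{a,\perp}\widetilde D_{X,\xi,T}\, e^{-\alpha\widetilde D_{X,\xi,T}^2}\bigr]\Bigr|
\;\le\; e^{-(\alpha-1)a^2}\,{\rm Tr}\bigl[|\widetilde D_{X,\xi,T}|\, e^{-\widetilde D_{X,\xi,T}^2}\bigr],
\end{align*}
so with $C=a^2$ the theorem is reduced to the uniform bound ${\rm Tr}[|\widetilde D_{X,\xi,T}|\,e^{-\widetilde D_{X,\xi,T}^2}]\le c_0$ for $T\ge 1$.

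For this uniform bound I would invoke the Bismut--Lebeau machinery. The Lichnerowicz-type identity
\begin{align*}
\widetilde D_{X,\xi,T}^2 \;=\; \widetilde D_{X,\xi}^2 \,+\, T\,\bigl\{\widetilde D_{X,\xi},\,\widetilde V\bigr\} \,+\, T^2\widetilde V^2,
\end{align*}
combined with the structure of $V$ made explicit in (\ref{5.7})--(\ref{5.8}), shows that $\widetilde V^2$ is uniformly positive outside any fixed neighborhood of $Y$, while on $D_1(N)$ it is bounded below by $c|n|^2+{\rm Id}_{\pi^*G}$. Away from $Y$ the square therefore grows like $T^2$, whereas inside a tube about $Y$ the Bismut-style normal rescaling $n\mapsto n/\sqrt T$ converts $\widetilde D_{X,\xi,T}^2$ into a normal-direction harmonic-oscillator-type operator coupled to $\widetilde D_{Y,E}^2$ along $Y$, whose resolvent enjoys $T$-independent estimates. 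Following the argument of \cite{BZ}, these two regimes combine via finite propagation speed to produce uniform pointwise bounds on the heat-kernel diagonals of $e^{-\widetilde D_{X,\xi,T}^2}$ and $\widetilde D_{X,\xi,T}^2\, e^{-\widetilde D_{X,\xi,T}^2}$. The Cauchy--Schwarz inequality
\begin{align*}
{\rm Tr}\bigl[|\widetilde D_{X,\xi,T}|\, e^{-\widetilde D_{X,\xi,T}^2}\bigr]
\;\le\; {\rm Tr}\bigl[\widetilde D_{X,\xi,T}^2\, e^{-\widetilde D_{X,\xi,T}^2}\bigr]^{1/2}\,
{\rm Tr}\bigl[e^{-\widetilde D_{X,\xi,T}^2}\bigr]^{1/2}
\end{align*}
then yields the desired constant $c_0$.

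The main obstacle is unquestionably the uniform-in-$T$ heat-kernel estimate of the second paragraph; the initial spectral localization and the final Cauchy--Schwarz assembly are routine. Fortunately the pin$^-$/spin and real/complex distinctions never interact with these estimates, which rest only on Clifford symbol calculus, elliptic regularity and finite propagation speed, so the arguments of \cite{BL,BZ} transfer with only the notational substitutions already made in Section 5b). The resulting statement is simply the real pin$^-$ analogue of \cite[Theorem 3.10]{BZ}.
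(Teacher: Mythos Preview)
Your proposal is correct and aligns with the paper's own treatment: the paper does not give an independent proof of Theorem \ref{t5.4} but simply asserts that the methods of \cite[Section 4]{BZ} (built on \cite{BL}) go through with only the harmonic-oscillator and local-index modifications discussed in Section 5e), and your sketch is precisely an explicit unpacking of that deferral---spectral localization to peel off $e^{-C\alpha}$, followed by the Bismut--Lebeau uniform-in-$T$ heat-kernel bound. One minor point: in the paper's correspondence Theorems \ref{t5.3}--\ref{t5.8} are the analogues of \cite[Theorems 3.7--3.12]{BZ}, so Theorem \ref{t5.4} matches \cite[Theorem 3.8]{BZ} rather than 3.10.
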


Take now $a_0>0$ such that the operator $\widetilde D_{Y,E}$ has no nonzero eigenvalues in the interval $[-2a_0,2a_0]$.

\begin{thm}\label{t5.5}
For $T $ large enough, then
\begin{align}\label{5.16}
\dim K_T^{a_0}=\dim\ker\left(\widetilde D_{Y,E}\right).
\end{align}
Moreover,
\begin{align}\label{5.17}
\lim_{T\rightarrow +\infty}{\rm Tr}\left[\left|\widetilde D_{X,\xi,T}\right|P_T^{a_0}\right]=0.
\end{align}
\end{thm}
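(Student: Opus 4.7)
The proof follows the Bismut-Lebeau localization method used in \cite{BZ}, adapted to the real pin$^-$ setting. The plan is to show that as $T\to+\infty$, the low-lying spectrum of $\widetilde D_{X,\xi,T}$ concentrates on the normal directions near $Y$ and is modelled by a harmonic oscillator whose ground state space is canonically isomorphic to $\ker(\widetilde D_{Y,E})$.

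First I would compute $V^2$ on the tube $D_1(N)$ from (5.8): using $\widetilde c(n)^*\widetilde c(n)=|n|^2$ on the $F^*(N)\otimes E$ summand and the fact that $v$ acts as the identity on the $G$-summand, one gets $V^2=|n|^2\cdot\operatorname{Id}$ on the first factor and $V^2=\operatorname{Id}$ on $\pi^*G$. Hence $T^2\widetilde V^2$ forces localisation near $Y=\{n=0\}$ in the fibre directions and supplies a mass of order $T^2$ on the $G$-summand, which decouples. Squaring (5.13) via the Lichnerowicz-type calculation yields, modulo lower-order error,
\begin{align*}
\widetilde D_{X,\xi,T}^2 = \widetilde D_{X,\xi}^2 + T\,[\widetilde D_{X,\xi},\widetilde V] + T^2\widetilde V^2,
\end{align*}
where the commutator $[\widetilde D_{X,\xi},\widetilde V]$ is the linear-in-$T$ cross-term that, after the rescaling $n\mapsto n/\sqrt T$, becomes the interaction term of a harmonic oscillator.

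Second, performing the Bismut-Lebeau rescaling on the fibres of $N$ and pushing forward to $N\to Y$, the family $\widetilde D_{X,\xi,T}^2$ converges on bounded energies to a model operator of the form $\widetilde D_{Y,E}^2\otimes 1 + 1\otimes H_N$, with $H_N$ a supersymmetric harmonic oscillator acting on sections of $F^*(N)\otimes E$ over the normal fibres. The kernel of $H_N$ is one-dimensional per fibre and is spanned by a Gaussian valued in a distinguished line of $F_+^*(N)$; the pin$^-$-equivariant factorisation $S_+(X)|_Y=S_+(Y)\otimes F(N)$ from (1.11), paired with this line, identifies the resulting finite-rank subbundle with $S_+(Y)\otimes E$, and the residual tangential operator is precisely $\widetilde D_{Y,E}$. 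The $G$-summand contributes only eigenvalues of size $\gtrsim T$ and so is invisible in $K_T^{a_0}$ for $T$ large.

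Third, quasi-mode and resolvent estimates paralleling those of \cite[Theorems 3.7--3.12]{BZ}, with analytic input from \cite{BL}, upgrade this formal convergence into a genuine spectral statement: for $T$ large, the eigenvalues of $\widetilde D_{X,\xi,T}$ in $[-a_0,a_0]$ are in bijection (with multiplicity) with those of the model operator in the same interval. Since $\widetilde D_{Y,E}$ has no non-zero eigenvalues in $[-2a_0,2a_0]$ and $H_N$ is strictly positive on the orthogonal complement of its kernel, the model's spectrum in $[-a_0,a_0]$ consists exactly of $\dim\ker(\widetilde D_{Y,E})$ copies of $0$, which yields (5.16). Moreover the corresponding eigenvalues of $\widetilde D_{X,\xi,T}$ decay to $0$ at rate $O(T^{-1/2})$ coming from the off-diagonal error in the quasi-mode construction, which gives (5.17). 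The main obstacle lies in this third step: establishing uniform-in-$T$ resolvent bounds away from the low-energy band and controlling the error from the cutoff to $D_1(N)$ together with the $T$-dependent validity of the rescaling. This is the heart of the Bismut-Lebeau machinery; the real-bundle setting here alters only the Clifford-module bookkeeping (and requires tracking the quaternionic structure on $S_+(X)$), not the analytic substance.
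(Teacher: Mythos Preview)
Your proposal is correct and follows the same route as the paper. The paper's own argument (Section~5e)) simply says that Theorems~\ref{t5.3}--\ref{t5.8} are obtained by adapting \cite[Section~4]{BZ} and \cite{BL} with two modifications: replacing the complex harmonic oscillator by its real analogue (their Propositions~5.10--5.12 and Theorem~5.13, whose kernel is the one-dimensional line spanned by $\exp(-|Z|^2/2)$), and noting the even-dimensional local index computation is in fact simpler; together with the factorisation $S_+(X)|_Y=S_+(Y)\otimes F(N)$ from (5.33), this is precisely the three-step scheme you outline.
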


\begin{thm}\label{t5.6}
There exist $c>0$, $\gamma\in(0,1]$ such that for $u\in(0,1]$, $0\leq T\leq 1/u$, then
\begin{align}\label{5.18}
\left|{\rm sup}(T,1){\rm Tr}\left[\widetilde V\exp\left(- \left(u\widetilde D_{X,\xi,T} +T\widetilde V\right)^2\right)\right] \right|\leq c(u(1+T))^\gamma. 
\end{align}
\end{thm}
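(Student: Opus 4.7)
The plan is to adapt the argument of Bismut-Zhang \cite[Theorem 3.12]{BZ} to the real pin$^-$ setting. The crucial observation is that the trace on the left of (\ref{5.18}) vanishes identically at $u=0$ for a grading-parity reason, so the estimate amounts to controlling its rate of growth in $u$.

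The vanishing at $u=0$ is as follows. Since $\widetilde V=1\otimes V$ and $V\in\End^{\text{odd}}(\xi)$ swaps $\xi_+$ with $\xi_-$, the operator $\widetilde V$ swaps $S_+(X)\otimes\xi_+$ with $S_+(X)\otimes\xi_-$, while $\widetilde V^2$ preserves the ${\bf Z}_2$-grading. Hence $\widetilde V\exp(-T^2\widetilde V^2)$ is off-diagonal in the decomposition $L^2(S_+(X)\otimes\xi_+)\oplus L^2(S_+(X)\otimes\xi_-)$, so its trace is identically zero in $T$.

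Next I would set $A(u,T):=u\widetilde D_{X,\xi,T}+T\widetilde V=u\widetilde D_{X,\xi}+T(u+1)\widetilde V$. A direct computation yields $A(u,T)^2-T^2\widetilde V^2=u\,R(u,T)$, where
\begin{equation*}
R(u,T):=u\widetilde D_{X,\xi}^2+T(u+1)\{\widetilde D_{X,\xi},\widetilde V\}+T^2(u+2)\widetilde V^2.
\end{equation*}
Duhamel's formula then gives
\begin{equation*}
\tr\bigl[\widetilde V\exp(-A(u,T)^2)\bigr]=-u\int_0^1\tr\bigl[\widetilde V\exp(-sA(u,T)^2)\,R(u,T)\,\exp(-(1-s)T^2\widetilde V^2)\bigr]\,ds,
\end{equation*}
so it suffices to bound this integral by $c(u(1+T))^{\gamma}/\sup(T,1)$ uniformly for $u\in(0,1]$ and $0\le T\le 1/u$.

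The remaining analytic work proceeds by controlling each piece of $R(u,T)$ via heat-kernel trace-norm estimates, entirely in the spirit of \cite{BZ}, which in turn rely on Bismut-Lebeau \cite{BL}. The $\widetilde V^2$ piece is absorbed using the exponential decay of $\exp(-(1-s)T^2\widetilde V^2)$ away from the zero locus $Y=\{V=0\}$; the $\widetilde D_{X,\xi}^2$ piece is tamed by the smoothing of $\exp(-sA(u,T)^2)$, whose square still contains $u^2\widetilde D_{X,\xi}^2$. The main obstacle is the mixed first-order term $T(u+1)\{\widetilde D_{X,\xi},\widetilde V\}$: the extra factor of $T$ can only be absorbed by localizing to a tubular neighbourhood of $Y$ and using the Lichnerowicz-type formula for $A(u,T)^2$, which in normal coordinates reduces to a harmonic oscillator of scale $T$ in the normal directions whose ground state is canonically identified with $\ker V|_Y$. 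The constraint $T\le 1/u$ keeps the rescaled parameter $uT\le 1$ bounded, while finite propagation speed bounds the contribution from outside the tubular neighbourhood by exponentially small terms. Combining these estimates as in \cite[Proof of Theorem 3.12]{BZ} yields the desired bound, with the factor $\sup(T,1)$ exactly absorbing the last unfavourable power of $T$.
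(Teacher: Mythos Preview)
Your Duhamel expansion has a genuine analytic gap. The base point you expand around, $u=0$, gives the operator $T\widetilde V$, which is a bundle endomorphism acting fibrewise; hence $\exp(-T^2\widetilde V^2)$ is a multiplication operator and is \emph{not} trace-class on $L^2(X,S_+(X)\otimes\xi)$. Your parity observation that $\widetilde V\exp(-T^2\widetilde V^2)$ is off-diagonal in the $\xi_\pm$ grading is correct pointwise, but it does not make a nonexistent trace equal to zero---it makes the identity you write down ill-posed. The Duhamel integral you are left with,
\[
\int_0^1\tr\Bigl[\widetilde V\,e^{-sA(u,T)^2}\,R(u,T)\,e^{-(1-s)T^2\widetilde V^2}\Bigr]\,ds,
\]
is likewise divergent near $s=0$: the factor $e^{-sA^2}$ has trace norm of order $s^{-(\dim X)/2}$, $R(u,T)$ is second-order, and $e^{-(1-s)T^2\widetilde V^2}$ provides no smoothing, so the integrand blows up like $s^{-(\dim X)/2-1}$.

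The paper does not give a detailed proof of Theorem~\ref{t5.6}; it refers wholesale to \cite[Section~4]{BZ} (in the list of correspondences Theorems~\ref{t5.3}--\ref{t5.8} $\leftrightarrow$ \cite[Theorems~3.7--3.12]{BZ}, this one matches Theorem~3.10 rather than~3.12) and isolates the two modifications needed: the real harmonic oscillator worked out in Propositions~\ref{t5.10}--\ref{t5.12} and Theorem~\ref{t5.13}, and the simpler even-dimensional local index computation. The actual argument in \cite{BZ} never perturbs around $u=0$. It works directly with the heat kernel of $(u\widetilde D_{X,\xi}+T(u+1)\widetilde V)^2$ for $u>0$, using the Lichnerowicz formula, finite propagation speed to localize to a tubular neighbourhood of $Y$, the factorization $S_+(X)|_Y=S_+(Y)\otimes F(N)$ of~(\ref{5.33}), and the explicit Mehler-type asymptotics of the normal harmonic oscillator---precisely where Theorem~\ref{t5.13} enters. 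Your final paragraph names exactly these ingredients, and that is the whole proof; they must be applied from the start rather than invoked to repair a Duhamel split whose individual pieces are not trace-class.
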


\begin{thm}\label{t5.7}
For any $T>0$, the following identity holds,
\begin{align}\label{5.19}
\lim_{u\rightarrow 0^+} {\rm Tr}\left[\frac{T}{u}\widetilde V  \exp\left(- \left(u\widetilde D_{X,\xi,T} +\frac{T}{u}\widetilde V\right)^2\right)\right]  =0.
\end{align}
\end{thm}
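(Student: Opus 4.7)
The plan is to follow Bismut--Zhang \cite{BZ} and argue that the semiclassical limit $u\to 0^+$ (with $T$ fixed) localises the heat semigroup near $Y$, where a fibrewise model operator can be computed explicitly. Set $A_u = u\widetilde{D}_{X,\xi,T} + (T/u)\widetilde{V}$, so that
\begin{align*}
A_u^2 \;=\; u^2\widetilde{D}_{X,\xi,T}^2 \;+\; T\bigl\{\widetilde{D}_{X,\xi,T},\widetilde{V}\bigr\} \;+\; \frac{T^2}{u^2}\widetilde{V}^2 .
\end{align*}
The essential geometric input is (\ref{5.8}): in a tubular neighbourhood of $Y$, the endomorphism $\widetilde{V}$ decomposes as $\tau^{N*}\widetilde{c}(n)\oplus \mathrm{Id}_{\pi^*G}$, so $\widetilde{V}^2$ restricts to $|n|^2\mathrm{Id}\oplus\mathrm{Id}$ on the two summands. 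In particular $\widetilde{V}$ is uniformly invertible outside any tubular neighbourhood of $Y$, and its kernel along $Y$ is exactly $F^*(N)\otimes E$.

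First I would localise the trace to a tubular neighbourhood $\mathcal{U}_\epsilon$ of $Y$: outside $\mathcal{U}_\epsilon$ one has $\widetilde{V}^2\geq \delta>0$, so Duhamel expansion together with standard heat-kernel bounds (in the spirit of Theorems \ref{t5.3} and \ref{t5.4}) gives that the contribution from $X\setminus\mathcal{U}_\epsilon$ is bounded by $C(T/u)\,e^{-cT^2/u^2}$, which vanishes in the limit. Inside $\mathcal{U}_\epsilon$ I would pass to Fermi normal coordinates $(y,n)$ adapted to $Y$, and apply the Bismut--Lebeau rescaling $n\mapsto un$. Under this substitution the Jacobian contributes a factor $u^{\mathrm{rk}(N)}$; the tangential piece $u\,c(\pi^*e_i)\nabla_{e_i}$ of $u\widetilde{D}_{X,\xi,T}$ drops out in the limit; the potential $(T^2/u^2)|n|^2$ becomes the bounded fibrewise $T^2|n'|^2$; and the prefactor $(T/u)\widetilde{V}$ becomes $T\tau^{N*}\widetilde{c}(n')$ on the $F^*\otimes E$ summand, which is bounded on compacta.

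Next I would estimate each summand. On the $F^*\otimes E$ summand, the rescaled integrand is uniformly bounded (it converges to a Mehler-type kernel of a fibrewise harmonic oscillator, as in \cite[\S 9]{BZ}), so the total contribution is of order $u^{\mathrm{rk}(N)}$. On the $\pi^*G$ summand, $\widetilde{V}^2\equiv\mathrm{Id}$, so $\exp\bigl(-(T/u)^2\widetilde{V}^2\bigr)$ is of order $e^{-T^2/u^2}$ and the contribution is of order $(T/u)\,e^{-T^2/u^2}$. Both tend to $0$ as $u\to 0^+$ since $\mathrm{rk}(N)=8(m-n)\geq 8$ whenever $Y\neq X$ (the theorem is vacuous when $Y=X$). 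Summing over the two summands yields (\ref{5.19}).

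The main obstacle is the rigorous justification of the uniform $C^0$ control of the heat-kernel expansion under the degenerate rescaling $n\mapsto un$, since $A_u^2$ is not uniformly elliptic in $(y,n)$ as $u\to 0$. The required resolvent bounds on weighted Sobolev spaces, the Schur-test control of the cross terms, and the passage to the model operator are precisely the content of Bismut--Lebeau \cite{BL}; in our real Clifford and pin$^-$ setting they should carry over from \cite{BZ} once one verifies that the pin$^-$-equivariance of $\widetilde{V}$ and the real structure of $S_+(X)\otimes\xi$ are compatible with the rescaling and the model reduction.
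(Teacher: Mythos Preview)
Your overall strategy---localise near $Y$, rescale the normal variable, reduce to a fibrewise harmonic oscillator---is exactly the route the paper takes: it simply refers to \cite[Section~4]{BZ} after replacing the complex harmonic-oscillator model there by the real one set up in Propositions~\ref{t5.10}--\ref{t5.12} and Theorem~\ref{t5.13}. So the ingredients you name are the right ones.

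The gap is in your power-counting on the $F^*(N)\otimes E$ summand. You rescale only the normal coordinate $n\mapsto un'$, collect the Jacobian $u^{\mathrm{rk}(N)}$, and then assert that the rescaled diagonal heat kernel is uniformly bounded. But you have not rescaled the tangential direction: the operator $A_u^2$ still contains $-u^2\Delta_Y$ along $Y$, whose diagonal heat kernel is of order $u^{-\dim Y}$. (Equivalently, once the tangential piece ``drops out'' the limiting operator has no $y$-derivatives, so its heat semigroup is $\delta_Y(y,y')$ in the base and is not trace-class.) Combining this with your Jacobian gives a net power $u^{\mathrm{rk}(N)-\dim X}=u^{-\dim Y}$, which \emph{diverges} for $\dim Y\geq 1$; the ``obstacle'' you flag at the end is therefore not merely a matter of uniform control but a wrong order of magnitude.

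What actually produces the vanishing in \cite{BZ} is the full local-index (Getzler) rescaling in the tangential variables as well: the cancellations in $\mathrm{tr}_{S_+(X)}[\,c(e_I)\,]$ kill the divergent powers of $u$ and leave a finite local density on $Y$, which one then shows integrates to zero. In \cite{BZ} this last step is a parity argument ($\dim Y$ odd); here the paper notes that in the even-dimensional pin$^-$ situation the corresponding local-index step is in fact simpler, but your sketch does not reach it. In short, the Jacobian alone cannot carry the argument; you need the Clifford-trace cancellations from the Getzler rescaling both to see that the limit is finite and to see that it is zero.
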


\begin{thm}\label{t5.8}
There exist $c>0$, $\delta\in(0,1]$ such that for $u\in(0,1] $, $T\geq 1$, then
\begin{align}\label{5.20}
\left|{\rm Tr}\left[\frac{T}{u}\widetilde V\exp\left(- \left(u\widetilde D_{X,\xi,T} +\frac{T}{u}\widetilde V\right)^2\right)\right] \right|\leq\frac{ c}{T^\delta} .
\end{align}
\end{thm}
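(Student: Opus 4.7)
The plan follows the localization-plus-rescaling strategy of Bismut--Zhang \cite[Theorem 3.12]{BZ}, adapted to the pin$^-$ and real-bundle setting prepared in Section 5b). Set $\mathcal{D}_{u,T}:=u\widetilde D_{X,\xi,T}+(T/u)\widetilde V=u\widetilde D_{X,\xi}+\lambda\widetilde V$ with $\lambda:=uT+T/u$; note that $\lambda\geq T/u\geq T\geq 1$ on the range $(u,T)\in(0,1]\times[1,\infty)$. By (\ref{5.8}), on the $\pi^*(F^*(N)\otimes E)$-block of $\xi|_{D_1(N)}$ the potential $\widetilde V^2$ equals $|n|^2\cdot\Id$, on the $\pi^*G$-block it equals $\Id$, and outside a fixed tubular neighbourhood of $Y$ it is bounded below by a positive constant.

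The first step is to localize the trace near $Y$. Finite propagation speed for the wave equation of $\mathcal{D}_{u,T}$, applied as in \cite[Section 11]{BL}, gives Gaussian off-diagonal decay of the heat kernel of $\mathcal{D}_{u,T}^2$, so up to an error of size $c\exp(-c'\lambda^2)$ the trace in (\ref{5.20}) may be replaced by its restriction to a small tubular neighbourhood $\mathcal{U}$ of $Y$. On the $\pi^*G$-block of $\xi|_{\mathcal{U}}$ the pointwise lower bound $\mathcal{D}_{u,T}^2\geq\lambda^2$ forces that contribution to be Gaussian-small in $\lambda$. Both of these errors are much smaller than $c/T^\delta$ and may be discarded.

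The substantive analysis concerns the remaining contribution on the $\pi^*(F^*(N)\otimes E)$-block. I would perform a Getzler-type rescaling of the normal coordinate, $n=\epsilon\tilde n$ with $\epsilon:=\sqrt{u/\lambda}=u/\sqrt{T(1+u^2)}$, along the lines of \cite[Sections 8--9]{BL} and \cite[Section 3e]{BZ}. Under this rescaling the normal part of $\mathcal{D}_{u,T}^2$ converges to the harmonic oscillator $u\lambda(-\partial_{\tilde n}^2+|\tilde n|^2)$ (with $u\lambda\geq T$), the tangential part converges to $u^2\widetilde D_{Y,E}^2$, and $(T/u)\widetilde V$ rescales to $\sqrt{T/(1+u^2)}\;\tau^{N*}\widetilde c(\tilde n)$ plus a Duhamel remainder. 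The leading fibrewise trace over $N_y$ vanishes because $\tau^{N*}\widetilde c(\tilde n)$ exchanges the $\pm$-components of $F^*(N)$, while the limiting harmonic-oscillator heat kernel is diagonal in that grading and even in $\tilde n$.

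The bound (\ref{5.20}) then comes from the first surviving term in the Volterra expansion around this limiting model. Each correction carries an extra power of $\epsilon$, and the parity/grading cancellation forces the first non-vanishing correction to appear at order $\epsilon^2$; together with the prefactor $\sqrt{T/(1+u^2)}$ this gives an estimate of the form $\sqrt{T/(1+u^2)}\cdot\epsilon^2\cdot O(1)=u^2/(\sqrt{T}\,(1+u^2)^{3/2})\leq c/T^{1/2}$, so that (\ref{5.20}) holds with $\delta=1/2$. The main technical obstacle is to guarantee this bound \emph{uniformly} in $(u,T)\in(0,1]\times[1,\infty)$: as $u\to 0$ the kinetic term $u^2\Delta$ degenerates while the potential $\lambda^2\widetilde V^2$ diverges, and balancing these competing scales at every step of the Volterra expansion requires the full functional-analytic machinery of Bismut--Lebeau \cite[Chapters 10--11]{BL}. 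The passage from the complex setting of \cite{BL,BZ} to our real pin$^-$ setting is essentially cosmetic: the real structure of $F^*(N)$ in dimension $8(m-n)$ recalled in Section 1b) makes the rescaled Clifford action compatible with the real spinor bundle throughout, while the simplifying choices of Section 5b) put all operators into the block-decomposed form used above.
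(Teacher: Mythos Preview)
Your proposal is correct and follows the same route as the paper: both defer to the Bismut--Zhang/Bismut--Lebeau machinery of \cite[Section~4]{BZ} and \cite{BL}, and the paper's own argument in Section~5e) is in fact terser than yours---it records the real-spinor modification of the fibrewise harmonic oscillator (Propositions~\ref{t5.10}--\ref{t5.12} and Theorem~\ref{t5.13}, replacing \cite[Section~4a)]{BZ}) and then asserts that \cite[Sections~4b)--e)]{BZ} go through with little change. Your sketch of the localization near $Y$, the normal rescaling, and the parity cancellation of the leading term is precisely the content of those sections; the one point the paper makes explicit that you pass over as ``cosmetic'' is the identification of the kernel of the real model operator $(D^E+\tau^*\widetilde c(Z))^2$ via $S=(2N-m)\sigma$.
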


$\ $

{\it d). Proof of Theorem \ref{t5.1}}

We construct a closed one form on ${\bf R}_+^*\times {\bf R}_+$ and then use it to prove Theorem \ref{t5.1}, in exactly the same way as in Bismut-Zhang \cite{BZ}.

\begin{thm}\label{t5.9}
Let  $u>0$, $T\geq 0$. Let $\beta_{u,T}$ be the one form on ${\bf R}_+^*\times {\bf R}_+$,
\begin{align}\label{5.21}
\beta_{u,T}=du{\rm Tr}\left[\widetilde D_{X,\xi,T}\exp\left(-u^2\widetilde D^2_{T,\xi,T}\right)\right]+dT{\rm Tr}\left[u\widetilde V\exp\left(-u^2\widetilde D^2_{X,\xi,T}\right)\right].
\end{align}
Then the $ 1$-form $\beta_{u,T}$ is closed.

\begin{proof} Theorem \ref{t5.9} can be proved in exactly the same way as in \cite[Theorem 3.4]{BZ}.
\end{proof}
\end{thm}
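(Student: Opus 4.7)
Writing $\beta_{u,T}=f(u,T)\,du+g(u,T)\,dT$ with
\[
f(u,T)=\tr\bigl[\widetilde D_{X,\xi,T}\exp(-u^2\widetilde D^2_{X,\xi,T})\bigr], \qquad g(u,T)=\tr\bigl[u\widetilde V\exp(-u^2\widetilde D^2_{X,\xi,T})\bigr],
\]
the identity $d\beta_{u,T}=0$ is equivalent to the single identity $\partial_T f=\partial_u g$ on ${\bf R}_+^*\times {\bf R}_+$. The plan is to compute both sides directly using Duhamel's formula together with the cyclic property of the trace. Because $X$ is compact and $\widetilde D_{X,\xi,T}$ is a first order self-adjoint elliptic operator, the heat operator $\exp(-u^2\widetilde D^2_{X,\xi,T})$ is trace class with smooth kernel for every $u>0$, so all traces appearing below are finite and cyclicity applies to the polynomial-times-heat-operator combinations that arise.

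Since $\widetilde D_{X,\xi,T}=\widetilde D_{X,\xi}+T\widetilde V$, we have $\partial_T \widetilde D_{X,\xi,T}=\widetilde V$, and Duhamel's principle gives
\[
\partial_T e^{-u^2\widetilde D^2_{X,\xi,T}}=-u^2\int_0^1 e^{-su^2\widetilde D^2_{X,\xi,T}}\bigl(\widetilde D_{X,\xi,T}\widetilde V+\widetilde V\widetilde D_{X,\xi,T}\bigr)e^{-(1-s)u^2\widetilde D^2_{X,\xi,T}}\,ds.
\]
Applying the product rule to $f$, inserting this formula, and collapsing the $s$-integral via cyclicity (using that $\widetilde D_{X,\xi,T}$ commutes with $\exp(-u^2\widetilde D^2_{X,\xi,T})$) yields
\[
\partial_T f=\tr\bigl[\widetilde V\, e^{-u^2\widetilde D^2_{X,\xi,T}}\bigr]-2u^2\,\tr\bigl[\widetilde V\widetilde D^2_{X,\xi,T}\,e^{-u^2\widetilde D^2_{X,\xi,T}}\bigr].
\]

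On the other side, $\widetilde D_{X,\xi,T}$ is independent of $u$, so no Duhamel step is required and $\partial_u e^{-u^2\widetilde D^2_{X,\xi,T}}=-2u\widetilde D^2_{X,\xi,T}e^{-u^2\widetilde D^2_{X,\xi,T}}$. The product rule applied to $g$ then produces exactly the matching expression
\[
\partial_u g=\tr\bigl[\widetilde V\, e^{-u^2\widetilde D^2_{X,\xi,T}}\bigr]-2u^2\,\tr\bigl[\widetilde V\widetilde D^2_{X,\xi,T}\,e^{-u^2\widetilde D^2_{X,\xi,T}}\bigr],
\]
so $\partial_T f=\partial_u g$, which proves closedness. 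No real obstacle arises here: the argument is a formal bookkeeping computation combining Duhamel's formula with the cyclic property of the trace, entirely parallel to \cite[Theorem 3.4]{BZ}. The only step requiring some care is tracking signs and verifying that the symmetrized combination $\widetilde D_{X,\xi,T}\widetilde V+\widetilde V\widetilde D_{X,\xi,T}$ collapses to $2\widetilde V\widetilde D^2_{X,\xi,T}$ under the trace, which it does precisely because $\widetilde D_{X,\xi,T}$ commutes with its own heat operator.
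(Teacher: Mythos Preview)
Your proof is correct and is essentially the standard argument that the paper defers to via \cite[Theorem 3.4]{BZ}: a direct Duhamel computation combined with cyclicity of the trace to verify $\partial_T f=\partial_u g$. There is nothing to add; your justification of trace-class issues and of the collapse under cyclicity is exactly what is needed.
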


\noindent {\it Proof of Theorem \ref{t5.1}}. Fix constants $\epsilon$, $A$, $T_0$ such that $0<\epsilon<1\leq A<+\infty$, $0\leq T_0<+\infty$.  Let $\Gamma_{\epsilon,A,T_0}$ be the oriented contour in ${\bf R}_+^*\times {\bf R}_+$ as constructed in \cite{BZ}, consisting of four oriented pieces,
$$\Gamma_1:\ \ T=T_0;\ \ \epsilon\leq u\leq A,$$
$$\Gamma_2:\ \ 0\leq T\leq T_0;\ \ u=A,$$
$$\Gamma_3:\ \ T=0;\ \ \epsilon\leq u\leq A,$$
$$\Gamma_4:\ \ 0\leq T\leq T_0;\ \ u=\epsilon,$$
with the counterclockwise orientation. 

For $1\leq k\leq 4$, set
$$I_k=\int_{\Gamma_k}\beta_{u,T}.$$

Then by Theorem \ref{t5.9}, one gets the identity
\begin{align}\label{5.22}
\sum_{k=1}^4I_k=0. 
\end{align}

Theorem \ref{t5.1} then follows by making in (\ref{5.22}), $A\rightarrow +\infty$, $T_0\rightarrow +\infty$ and $\epsilon\rightarrow 0$ in this order, and proceeding in exactly the same strategy as in Bismut-Zhang \cite[Sections 3e)-g)]{BZ}. All one need to  notify are the following two points. 

i). We use the intermediary results Theorems \ref{t5.3}-\ref{t5.8} here, instead of those in \cite[Section 3d)]{BZ};

ii). Since $S_+(X)$ and $S_+(Y)$ are  ${\bf H}$ linear spaces and the twisted Dirac operators $\widetilde D_{X,\xi_\pm,T}$, $\widetilde D_{Y,E}$ as well as the map $\widetilde V$ are ${\bf H}$ linear, all the mod ${\bf Z}$ terms in \cite[Sections 3e)-g)]{BZ} can and will be replaced by mod $2{\bf Z}$.

By noting these two points and by proceeding in exactly the same way as in \cite[Sections 3e)-g)]{BZ}, one gets Theorem \ref{t5.1}.

$\ $

{\it e). Proof of Theorems \ref{t5.3}-\ref{t5.8}}

The methods of Bismut-Zhang \cite[Section 4]{BZ}, which go back to Bismut-Lebeau \cite{BL}, can be adapted here with little change to prove Theorems \ref{t5.3}-\ref{t5.8}. All one need to take care are the following two points:

i). We should modify the harmonic oscillator construction in \cite[Section 4a)]{BZ} for complex spinor spaces in order to fit with the real situation here;

ii). Since we are now in the even dimensional situation, the local index techniques in \cite{BZ} should be modified. But the even dimensional case turns out to be much simpler here, and does not cause any extra difficulty than \cite{BZ}. Details are fairly easy to fill and are left to the interested reader.

So we now concentrate on the modification of the harmonic oscillator construction, which is also easy. It is included here only for completeness.

We use the notation of Section 1b).

Set $m={\rm rk}(E)\equiv 0$ (mod 8). Let $e_1,\,\cdots,\,e_m$ be an orthonormal basis of $E$ and $e_1^*,\,\cdots,\,e_m^*$ be the dual basis of $E^*$. 

Let $\Gamma(\Lambda(E^*))$ be the vector space of smooth sections of $\Lambda(E^*)$ over $E$. Let $D^E$ be the operator acting on $\Gamma(\Lambda(E^*))$ defined by
\begin{align}\label{5.23}
D^E=\sum_{i=1}^m\left(c(e_i)\otimes \tau^*\right)\nabla_{e_i}.
\end{align}

Let $Z$ be the generic point of $E$. Then $\tau^*\widetilde c(Z)$ acts on $\Gamma(\Lambda(E^*))$.

Set
\begin{align}\label{5.24}
S=\sum_{i=1}^mc(e_i)\widetilde c(e_i). 
\end{align}

\begin{prop}\label{t5.10}
The following identity holds,
\begin{align}\label{5.25}
\left(D^E+\tau^*\widetilde c(Z)\right)^2=-\Delta+|Z|^2+S.
\end{align}
\end{prop}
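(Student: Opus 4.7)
The plan is to expand $A^2$ with $A = D^E + \tau^*\widetilde c(Z)$ into three pieces,
\begin{align*}
A^2 = (D^E)^2 + \{D^E,\,\tau^*\widetilde c(Z)\} + (\tau^*\widetilde c(Z))^2,
\end{align*}
and to identify them respectively with $-\Delta$, $S$, and $|Z|^2$. The ingredients are all elementary: the Clifford relations for $c$ and $\widetilde c$, the involution $(\tau^*)^2 = 1$, the anticommutation of $\tau^*$ with odd Clifford operators, and the Leibniz rule on the flat vector space $E$.

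First I would handle $(D^E)^2$. Because the coefficient matrices $c(e_i)\otimes\tau^*$ are constant on $E$ and $[\nabla_{e_i},\nabla_{e_j}]=0$ on a flat Euclidean space, symmetrization in $i,j$ reduces $(c(e_i)\otimes\tau^*)(c(e_j)\otimes\tau^*)$ to $\tfrac12\{c(e_i)c(e_j)+c(e_j)c(e_i)\}(\tau^*)^2 = -\delta_{ij}$, so $(D^E)^2 = -\sum_i\nabla_{e_i}^2 = -\Delta$. Writing $Z=\sum_j z_je_j$, the identity $\tau^*\widetilde c(e_j)\tau^* = -\widetilde c(e_j)$ (from the anticommutation of the grading $\tau^*$ with the odd operator $\widetilde c(e_j)$) gives
\begin{align*}
(\tau^*\widetilde c(Z))^2 = -\sum_{i,j}z_iz_j\,\widetilde c(e_i)\widetilde c(e_j) = |Z|^2,
\end{align*}
after using $\widetilde c(e_i)\widetilde c(e_j)+\widetilde c(e_j)\widetilde c(e_i) = -2\delta_{ij}$ (inherited from the Clifford relations for $c$ by transposition).

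For the anticommutator, the Leibniz rule gives $\nabla_{e_i}(\tau^*\widetilde c(Z)) = \tau^*\widetilde c(e_i) + \tau^*\widetilde c(Z)\nabla_{e_i}$, hence
\begin{align*}
\{D^E,\tau^*\widetilde c(Z)\} = \sum_i (c(e_i)\otimes\tau^*)\,\tau^*\widetilde c(e_i) + \sum_i \{c(e_i)\otimes\tau^*,\,\tau^*\widetilde c(Z)\}\nabla_{e_i}.
\end{align*}
The first sum collapses via $(\tau^*)^2=1$ to $\sum_i c(e_i)\widetilde c(e_i) = S$. The remaining sum vanishes term by term: the bracketed anticommutator unpacks, again by $(\tau^*)^2=1$, to $c(e_i)\widetilde c(Z) + c(e_i)\tau^*\widetilde c(Z)\tau^*$, and invoking $\tau^*\widetilde c(Z)\tau^* = -\widetilde c(Z)$ produces an exact cancellation.

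The only real subtlety is bookkeeping the signs produced by $(\tau^*)^2 = 1$ and by the anticommutation of $\tau^*$ with the odd operators $c(e_i)$ and $\widetilde c(e_j)$; once these are set up correctly, this is exactly the standard Lichnerowicz/harmonic-oscillator expansion, now adapted from the complex spinor setting of \cite{BZ} to the real Clifford setup, which is precisely the modification flagged as needed in Remark \ref{t5.2}.
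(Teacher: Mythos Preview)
Your expansion is correct and is precisely the computation the paper has in mind; the paper's own proof consists of the single sentence ``The proof of (\ref{5.25}) is trivial.'' One small slip in your closing remark: $\tau^*$ actually \emph{commutes} with $c(e_i)$ (they act on the different tensor factors of $F\otimes F^*$) and anticommutes only with $\widetilde c(e_j)$, but your actual calculations use the correct relations throughout, so this does not affect the argument.
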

\begin{proof} The proof of (\ref{5.25}) is trivial.
\end{proof}

We now give another expression for $S$. Let $N$ be the number operator of $\Lambda(E^*)$, i.e., $N$ acts on $\Lambda^p(E^*)$ by multiplication by $p$.

\begin{prop}\label{t5.11}
The following identity holds,
\begin{align}\label{5.26}
S=(2N-m)\sigma.
\end{align}
\end{prop}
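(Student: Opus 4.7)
The plan is to do a direct computation using the explicit model for the Clifford algebra on $\Lambda(E^*)$ set up in Section~1b). Under the identification $c(E)=\Lambda(E^*)$ of (1.8), the operator $c(e)=e^*\wedge - i_e$ implements left Clifford multiplication, while the discussion following (1.8) says that $\widehat c(e)\sigma$ implements the right Clifford multiplication. Since $\widetilde c(e)$ is by definition the transpose of $c(e)$ — equivalently, the right Clifford action used to turn $F$ into a bimodule — one has
\begin{align*}
\widetilde c(e)=\widehat c(e)\sigma=(e^*\wedge +i_e)\sigma.
\end{align*}

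Next I would compute a single summand. Because $\|e_i\|=1$, the standard identities $(e_i^*\wedge)^2=0$, $i_{e_i}^2=0$ and $i_{e_i}(e_i^*\wedge)+(e_i^*\wedge)i_{e_i}=1$ give at once
\begin{align*}
c(e_i)\widehat c(e_i)
=(e_i^*\wedge - i_{e_i})(e_i^*\wedge + i_{e_i})
=(e_i^*\wedge) i_{e_i}-i_{e_i}(e_i^*\wedge)
=2\,e_i^*\wedge i_{e_i}-1.
\end{align*}
Since the factor $\sigma$ is scalar on each parity component and commutes with the whole expression to its right in the sense that we can collect it at the end, one obtains
\begin{align*}
c(e_i)\widetilde c(e_i)=\bigl(2\,e_i^*\wedge i_{e_i}-1\bigr)\sigma.
\end{align*}

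Finally I would sum on $i$. The operator $\sum_{i=1}^{m} e_i^*\wedge i_{e_i}$ acts on $\Lambda^p(E^*)$ as multiplication by $p$, so it coincides with the number operator $N$. Therefore
\begin{align*}
S=\sum_{i=1}^{m}c(e_i)\widetilde c(e_i)=(2N-m)\sigma,
\end{align*}
which is the claim. The only conceptual point to watch is the convention identifying $\widetilde c(e)$ with $\widehat c(e)\sigma$; once this is in place, the rest is a two‑line calculation, and no genuine obstacle arises — this is why, in parallel with Proposition~\ref{t5.10}, the result is essentially formal.
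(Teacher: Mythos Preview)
Your proof is correct and follows the same route as the paper: identify $\widetilde c(e_i)=\widehat c(e_i)\sigma$ (the paper's (5.27)), compute $\sum_i c(e_i)\widehat c(e_i)=2N-m$ (the paper's (5.28)), and combine. You simply make the verification of (5.28) explicit by expanding $(e_i^*\wedge - i_{e_i})(e_i^*\wedge + i_{e_i})$; the only superfluous remark is the comment about $\sigma$ commuting, since in $c(e_i)\widehat c(e_i)\sigma$ the factor $\sigma$ is already on the right and no commutation is needed.
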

\begin{proof}
By the construction in Section 1b), we know that
\begin{align}\label{5.27}
\widetilde c(e_i)=\widehat c(e_i)\sigma.
\end{align}
Also one verifies easily that
\begin{align}\label{5.28}
\sum_{i=1}^mc(e_i)\widehat c(e_i)=2N-m.
\end{align}

Formula (\ref{5.26}) follows from (\ref{5.24}), (\ref{5.27}) and (\ref{5.28}). 
\end{proof}

\begin{prop}\label{t5.12}
The lowest eigenvalue of the operator $S$ is $-m$. The corresponding eigenspace is one dimensional and is spanned by $1$.
\end{prop}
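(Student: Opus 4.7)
The plan is to simply diagonalize $S$ using Proposition \ref{t5.11} and inspect the spectrum. By that proposition, $S=(2N-m)\sigma$, and both $N$ and $\sigma$ are diagonal in the standard grading decomposition $\Lambda(E^*)=\bigoplus_{p=0}^{m}\Lambda^p(E^*)$. So the first step is to record that $S$ preserves this decomposition and acts on $\Lambda^p(E^*)$ by the scalar
\begin{equation*}
s_p=(2p-m)(-1)^p.
\end{equation*}

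Next, I would compute the minimum of $s_p$ over $0\le p\le m$. For even $p$ one has $s_p=2p-m$, a strictly increasing function of $p$ on the even integers $0,2,\ldots,m$; its smallest value is achieved at $p=0$ and equals $-m$. For odd $p$ one has $s_p=m-2p$, strictly decreasing on the odd integers $1,3,\ldots,m-1$; its smallest value is achieved at $p=m-1$ and equals $2-m$. Since $m\equiv 0\pmod 8$ and in particular $m\ge 8$, we have $-m<2-m$, so the overall minimum is $-m$, achieved only on $\Lambda^0(E^*)$.

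Finally, since $\Lambda^0(E^*)=\mathbb{R}\cdot 1$ is one-dimensional and no other $\Lambda^p(E^*)$ contributes the eigenvalue $-m$, the $-m$-eigenspace of $S$ is exactly $\mathbb{R}\cdot 1$, which is the claim. There is really no obstacle here; the only thing to be careful about is the parity of $m$ (used to guarantee that $p=m$ gives eigenvalue $+m$ rather than $-m$ on the top form) and that $m$ is large enough that the even-$p$ minimum strictly beats the odd-$p$ minimum, both of which follow from $m\equiv 0\pmod 8$.
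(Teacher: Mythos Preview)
Your proof is correct and follows the same route as the paper: use Proposition~\ref{t5.11} to see that $S$ acts on $\Lambda^p(E^*)$ as the scalar $(-1)^p(2p-m)$, then minimize over $p$. One small remark: the inequality $-m<2-m$ is just $0<2$ and needs no hypothesis on $m$; the only place the parity of $m$ is actually used is to rule out $p=m$ (odd $m$ would give $s_m=-m$ on the top form), exactly as you note.
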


\begin{proof}
For any $p$ and $\alpha\in\Lambda^p(E^*)$, one has
\begin{align}\label{5.29}
(2N-m)\sigma\alpha=(-1)^p(2p-m)\alpha.
\end{align}

Now for any $0\leq p\leq m$, one has $(-1)^p(2p-m)\geq -m$ with equality holds for $p=0$ (as $m$ is even).

Proposition \ref{t5.12} follows immediately. 
\end{proof}

From (\ref{5.25}), the operator $(D^E+\tau^*\widetilde c(Z))^2$ is of harmonic oscillator type. Therefore it has discrete spectrum and compact resolvent. 

We now have the following analogue of  \cite[Theorem 4.5]{BZ}.

\begin{thm}\label{t5.13}
The kernel of $(D^E+\tau^*\widetilde c(Z))^2$ is one dimensional and is spanned by
\begin{align}\label{5.30}
\beta=\exp\left(-\frac{|Z|^2}{2}\right).
\end{align}
Also,
\begin{align}\label{5.31}
\left(D^E+\tau^*\widetilde c(Z)\right)\beta=0.
\end{align}
\end{thm}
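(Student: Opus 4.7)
The plan is to invoke Proposition \ref{t5.10} to identify $(D^E+\tau^*\widetilde c(Z))^2$ with the harmonic-oscillator type operator $-\Delta+|Z|^2+S$, and then to diagonalize its two commuting ingredients separately. The crucial observation is that $-\Delta+|Z|^2$ acts only on the base $E\simeq {\bf R}^m$ while $S$, by Proposition \ref{t5.11}, is a constant-coefficient fibrewise endomorphism of $\Lambda(E^*)$; hence the two operators commute on Schwartz sections, and the $L^2$-spectrum of their sum decomposes as the sum of the individual spectra.

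First I would recall the standard fact that the isotropic harmonic oscillator $-\Delta+|Z|^2$ on ${\bf R}^m$ has discrete spectrum $\{m+2|\alpha|:\alpha\in{\bf Z}_{\geq 0}^m\}$, with one-dimensional ground eigenspace of eigenvalue $m$ spanned by $\exp(-|Z|^2/2)$. Combined with Proposition \ref{t5.12}, which states that $S\geq -m$ with $(-m)$-eigenspace one-dimensional and spanned by the constant section $1\in\Lambda^0(E^*)$, this forces $-\Delta+|Z|^2+S\geq 0$, with kernel equal to the tensor product of the two ground eigenspaces, i.e., the one-dimensional space spanned by $\beta=\exp(-|Z|^2/2)\otimes 1$. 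This establishes (\ref{5.30}).

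For (\ref{5.31}) I would argue that $D^E+\tau^*\widetilde c(Z)$ is formally self-adjoint on Schwartz sections of $\Lambda(E^*)$ over $E$ and has compact resolvent, being a first-order perturbation of an harmonic oscillator of the type just analysed; hence $\ker(D^E+\tau^*\widetilde c(Z))=\ker(D^E+\tau^*\widetilde c(Z))^2$, so (\ref{5.31}) is immediate from (\ref{5.30}).

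The main points requiring care are routine bookkeeping: one must confirm that $S$ commutes with both $\Delta$ and multiplication by $|Z|^2$ (which holds because $S$ acts only on the fibre factor $\Lambda(E^*)$ and is independent of $Z$), and that $-\Delta+|Z|^2+S$ is essentially self-adjoint, so that its tensor-product spectral decomposition is legitimate. Both facts are standard for quadratically confined Dirac-type operators on ${\bf R}^m$ with smooth bounded endomorphism perturbations, and no further surprises should arise.
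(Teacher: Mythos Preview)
Your proposal is correct and follows essentially the same approach as the paper: the paper also invokes Proposition~\ref{t5.10} together with Proposition~\ref{t5.12} and the standard fact that $\ker(-\Delta+|Z|^2-m)$ is spanned by the Gaussian to obtain (\ref{5.30}), and then deduces (\ref{5.31}) from $\beta\in\ker(D^E+\tau^*\widetilde c(Z))^2$ (noting one can also verify it directly). Your write-up simply makes the tensor decomposition and the self-adjointness step more explicit than the paper's terse proof.
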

\begin{proof}
The kernel of the operator $-\Delta+|Z|^2-m$ is spanned by $\exp(-\frac{|Z|^2}{2})$. The first part of the theorem follows from Propositions \ref{t5.10} and \ref{t5.12}.

Equation (\ref{5.31}) is a consequence of the fact that $\beta\in\ker (D^E+\tau^*\widetilde c(Z))^2$. One can also check it directly. 
\end{proof}

We now come back to the proof of Theorems \ref{t5.3}-\ref{t5.8}. Note that near the embedded manifold $Y$, we have the following pin$^-$ equivariant factorizations via (\ref{1.10}),
\begin{align}\label{5.32}
c(TX)|_Y=c(TY)\widehat\otimes c(N),
\end{align}
and 
\begin{align}\label{5.33}
S_+(X)=S_+(Y)\otimes F(N).
\end{align}

The proof of Theorems \ref{t5.3}-\ref{t5.8} can then be proceeded with little change as in \cite[Sections 4b)-e)]{BZ}.

As we have remarked, the difference in the local index calculation causes no difficulty and is even simpler here.

The other details are easy to fill and we will not make a line by line copy of 
\cite[Sections 4b)-e)]{BZ}.

\appendix

\section{An extended Rokhlin congruence formula}\label{sA}

In this Appendix, we prove a new Rokhlin type congruence not included in Zhang \cite{Z1}. As mentioned in  Introduction, the mod 2 indices studied in the main text appear most naturally in this version of congruences.

Let $K$ be an $8k+4$ dimensional compact oriented manifold such that $w_2(TK)\neq 0$. Let $B$ be a compact connected codimension two submanifold of $K$ such that $[B]\in H_{8k+2}(K,{\bf Z}_2)$ is the Poincar\'e dual of $w_2(TK)\in H^2(K,{\bf Z}_2)$. We assume the existence of such a submanifold and consider the case where $B$ is non-orientable. 

We fix a spin structure on $K\setminus B$. Then $B$ carries a canonically induced pin$^-$ structure (cf. Kirby-Taylor \cite{KT}).

\begin{rem}\label{ta1}
The case where $B$ is orientable has already been considered in Zhang \cite{Z1}. 
\end{rem}

Let $E$ be a real vector  bundle over $K$. Let $E_{\bf C}$ be the complexification of $E$.

Let $N$ be the normal bundle to $B$ in $K$. Let $e\in H^2(B,o(N))=H^2(B,o(TB))$ be the Euler class of $N$.

Denote by $i:B\hookrightarrow K$ the embedding of $B$ in $K$.

Let ${\rm ind}^t(i^*E)$ be the mod 2 topological index of the real vector bundle $i^*E$ over $B$.

The main result of this Appendix can be stated as follows.

\begin{thm}\label{ta2}
The following identity holds,
\begin{align}\label{a1}
\left\langle\widehat A(TK){\rm ch}\left(E_{\bf C}\right),[K]\right\rangle\equiv {\rm ind}^t\left(i^*E\right)
-\frac{1}{2}\left\langle \widehat A(TB)\tanh\left(\frac{e}{4}\right){\rm ch}\left(i^*E_{\bf C}\right),[B]\right\rangle\ \ ({\rm mod}\ 2).
\end{align}
\end{thm}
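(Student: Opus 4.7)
The plan is to reduce to the Atiyah-Patodi-Singer theorem on the spin manifold with boundary obtained by excising a tubular neighborhood of $B$ in $K$, and then to analyze the resulting boundary $\eta$-invariant via an adiabatic limit along the circle fibers of the sphere bundle of the normal bundle $N$.

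\textbf{Step 1 (Cut open $K$ along $B$).} Let $\nu(B) \subset K$ be a small tubular neighborhood identified with the unit disk bundle $D(N) \to B$, and set $K_0 = K \setminus \mathrm{int}\,\nu(B)$, so that $\partial K_0 = S(N)$ is the unit sphere bundle of $N$ (a circle bundle over $B$). Since $w_2(TK)$ is Poincar\'e dual to $[B]$ and the spin structure on $K\setminus B$ was fixed, $K_0$ is a compact spin manifold of dimension $8k+4$; choose a metric on $K$ that is of product type on a collar of $\partial K_0$.

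\textbf{Step 2 (APS on $K_0$).} Let $D_{K_0, E}$ be the Dirac operator on $K_0$ twisted by $E_{\mathbb{C}}$, with Atiyah-Patodi-Singer boundary conditions. The APS theorem gives
\begin{equation*}
\Ind D_{K_0, E} \;=\; \int_{K_0} \widehat A(TK)\,\ch(E_{\mathbb{C}}) \;-\; \overline\eta\bigl(D_{S(N), E|_{S(N)}}\bigr).
\end{equation*}
As the tube radius shrinks, the integrand is locally integrable on all of $K$, so the $K_0$-integral converges to $\langle \widehat A(TK)\ch(E_{\mathbb{C}}), [K]\rangle$. Moreover, since $\dim K_0 = 8k+4$ is a quaternionic dimension, the spinor bundle $S_+(K_0)$ carries a compatible $\mathbb{H}$-structure which is preserved by $D_{K_0, E}$ and its APS boundary condition; hence $\Ind D_{K_0, E} \in 2\mathbb{Z}$ and contributes $0\pmod 2$.

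\textbf{Step 3 (Adiabatic limit on $S(N)\to B$).} This is the main technical step. The sphere bundle $S(N)$ is a circle bundle over the non-orientable pin$^-$ manifold $B$, with the peculiarity that $w_1(N)=w_1(TB)$ forces it to be a ``twisted'' (non-principal) $S^1$-bundle whose monodromy couples to the orientation bundle of $B$. Choose a connection on $S(N)\to B$ and rescale the fiber metric by $\varepsilon \to 0$. Applying the Bismut-Cheeger/Dai adiabatic limit formula, suitably adapted to this non-orientable setting via the orientation double cover and the pin$^-$/spin compatibility described in Section 1, one obtains
\begin{equation*}
\overline\eta\bigl(D_{S(N), E|_{S(N)}}\bigr) \;\equiv\; \overline\eta\bigl(\widetilde D_{B,\,i^{*}E}\bigr) \;-\; \tfrac{1}{2}\bigl\langle \widehat A(TB)\,\tanh(e/4)\,\ch(i^{*}E_{\mathbb{C}}),[B]\bigr\rangle \pmod{2\mathbb{Z}}.
\end{equation*}
The first term is the tangential contribution coming from the kernel of the fiberwise Dirac operator along the $S^1$-fibers, which, under the identification $S_+(X)|_B = S_+(B)\otimes F(N)$ of Section 1c), reproduces the twisted Dirac operator $\widetilde D_{B, i^{*}E}$ of Definition~\ref{t2.6}; the second term is the local geometric contribution from the nonzero spectrum, whose small-time asymptotics yield the Bismut-Cheeger $\widehat\eta$-form integrating to $\widehat A(TB)\tanh(e/4)\ch(i^{*}E_{\mathbb{C}})$ (the $\tanh(e/4)$, as opposed to $\tanh(e/2)$, tracks the orientation-twist of the circle fibration).

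\textbf{Step 4 (Assemble and invoke Theorem~\ref{t4.3}).} Combining Steps 2 and 3 and reducing modulo $2\mathbb{Z}$:
\begin{equation*}
\bigl\langle \widehat A(TK)\ch(E_{\mathbb{C}}),[K]\bigr\rangle \;\equiv\; \overline\eta\bigl(\widetilde D_{B,\,i^{*}E}\bigr) \;-\; \tfrac{1}{2}\bigl\langle \widehat A(TB)\tanh(e/4)\ch(i^{*}E_{\mathbb{C}}),[B]\bigr\rangle \pmod{2}.
\end{equation*}
By Definition~\ref{t2.8}, the first term on the right is $\mathrm{ind}^a(i^{*}E)$, and by the main mod 2 index theorem (Theorem~\ref{t4.3}), $\mathrm{ind}^a(i^{*}E) = \mathrm{ind}^t(i^{*}E)$, which yields~\eqref{a1}.

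\textbf{Main obstacle.} The hard part is Step 3: carrying out the adiabatic limit cleanly on the non-orientable circle bundle $S(N)\to B$, matching the local contribution to $\tanh(e/4)$ with the correct sign and factor of $1/2$, and identifying the kernel of the fiberwise Dirac operator (together with the pin$^-$-spin compatibility inherited from the spin structure on $K\setminus B$) with exactly the twisted operator $\widetilde D_{B, i^{*}E}$ from Section 2. Everything else is either a direct invocation of APS, a parity argument using the quaternionic structure in dimension $8k+4$, or an application of Theorem~\ref{t4.3}.
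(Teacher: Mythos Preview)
Your overall architecture—excise a tube around $B$, apply the APS theorem on the spin complement $K_0$, use the quaternionic structure in dimension $8k+4$ to kill the APS index mod $2$, pass to an adiabatic limit on the circle bundle $S(N)\to B$, and finish with Theorem~\ref{t4.3}—is exactly the paper's strategy, which in turn follows \cite{Z1} closely.

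Where your argument diverges from the paper is in the bookkeeping across Steps~2--3. You want to shrink the tube so that $\int_{K_0}\widehat A\,\ch\to\langle\widehat A(TK)\ch(E_{\bf C}),[K]\rangle$, and then separately read off the adiabatic limit of $\overline\eta(D_{S(N)})$ as $\overline\eta(\widetilde D_{B,i^*E})-\tfrac12\langle\widehat A(TB)\tanh(e/4)\ch,[B]\rangle$. These two claims are not simultaneously valid. To apply APS one needs a product metric near $\partial K_0$, and maintaining that product structure while the fiber collapses forces curvature to concentrate near $B$, so the tube integral does not simply vanish in the limit. The paper (following \cite{Z1}) makes this explicit: it fixes the tube $N_1$ and blows up the base metric by $1/\epsilon$ on $N_1$, then computes \emph{two} limits as $\epsilon\to 0$. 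The adiabatic limit of $\overline\eta(D_{M,\epsilon})$ carries the local correction $\dfrac{\tanh(e/2)-e/2}{e\,\tanh(e/2)}$ (formula (\ref{a4})), while the limit of $\int_{N_1}\widehat A(R^{K,\epsilon})\ch$ carries $\dfrac{1}{e}\Bigl(\dfrac{e/2}{\sinh(e/2)}-1\Bigr)$ (formula (\ref{a5})). Neither of these is $-\tfrac12\tanh(e/4)$; it is only their \emph{sum} that produces the $\tanh(e/4)$ in (\ref{a1}). Your Step~3 has in effect absorbed the curvature-concentration contribution (\ref{a5}) into what you call the Bismut--Cheeger $\widehat\eta$-form, but that is not what the $\widehat\eta$-form for this circle bundle actually gives. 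The fix is to keep both pieces and track them through the same metric deformation, exactly as in (\ref{a3})--(\ref{a5}).
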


\begin{rem}\label{ta3}
Since $\tanh(\frac{e}{4})$ is an odd function in $e$, the characteristic number in the right hand side of (\ref{a1}) is well defined.
\end{rem}

The proof of Theorem \ref{ta2} is almost the same as in Zhang \cite{Z1} with minor modifications. So we only give a sketch.

\begin{proof}
Let $g^{TB}$ be a metric on $TB$. Let $g^N$ be a metric on $N$. Let $\pi:N\rightarrow B$ be the projection map of the normal bundle.

Set $N_1=\{n\in N|\, \|n\|_{g^N}\leq 1\}$, $M=\partial N_1$. Then $N_1$ is a disc bundle over $B$ with fiber $D$, $M$ is a circle bundle over $B$ with fiber $S^1$. The metric $g^N$ restricts to each fiber $S^1$ a metric $g^{S^1}$.

One constructs easily a metric $g^{TD}$ on $TD$ and a series of metrics $g^{TK,\epsilon}$, $\epsilon>0$ on $TK$ such that i). $g^{TK,\epsilon}$ is of product structure near $M$; ii). $g^{TK,\epsilon}|_M=\frac{1}{\epsilon}\pi^*g^{TB}\oplus g^{TS^1}$ and iii). $g^{TK,\epsilon}|_{N_1}=\frac{1}{\epsilon}\pi^*g^{TB}\oplus g^{TD}$. Note $R^{K,\epsilon}$ the curvature of the Levi-Civita connection of $g^{TK,\epsilon}$. 

Let $g^{i^*E}$ be a metric on $i^*E$. Let $\nabla^{i^*E}$ be a Euclidean connection on $i^*E$. $g^{i^*E}$ and $\nabla^{i^*E}$ extend to $i^*E_{\bf C}$ accordingly.

We can then construct a metric $g^E$ and a Euclidean connection $\nabla^E$ on $E$ such that i). $g^E$ and $\nabla^E$ are of product structure near $M$; ii). $g^E|_{N_1}=\pi^*g^{i^*E}$ and iii). $\nabla^E|_{N_1}=\pi^*\nabla^{i^*E}$. 

Let $D_{M,\pi^*i^*E,\epsilon}$ be the Dirac operator associated to $(M,g^{TK,\epsilon}|_M,\pi^*i^*E)$. Then as in \cite[Lemma 3.3]{Z1}, one can apply the Atiyah-Patodi-Singer index theorem for manifolds with boundary \cite{APS} to $K\setminus N_1$ to get the following formula,
\begin{align}\label{a3}
\left\langle \widehat A(TK){\rm ch}\left(E_{\bf C}\right),[K]\right\rangle\equiv \overline \eta\left(D_{M,\pi^*i^*E,\epsilon}\right)
+\left(\frac{1}{2\pi}\right)^{\frac{\dim K}{2}}\int_{N_1}\widehat A\left(R^{K,\epsilon}\right){\rm ch}\left(E_{\bf C},\nabla^{E_{\bf C}}\right)\ \ ({\rm mod}\ 2).
\end{align}

Now since the coupled connection on $\pi^*i^*E$ does not depend on $\epsilon$, the formula of \cite[(3.6)]{Z1} will take the following form in our situation here,
\begin{align}\label{a4}
\lim_{\epsilon\rightarrow 0}\overline\eta\left(D_{M,\pi^*i^*E,\epsilon}\right)\equiv \overline\eta\left(\widetilde D_{B,i^*E}\right)
-\left\langle \widehat A(TB){\rm ch}\left(i^*E_{\bf C}\right)\frac{\tanh\left(\frac{e}{2}\right)-\frac{e}{2}}{e\tanh\left(\frac{e}{2}\right)},[B]\right\rangle\ \ ({\rm mod}\ 2).
\end{align}
And the analogue of \cite[(3.7)]{Z1} turns out to be
\begin{align}\label{a5}
\lim_{\epsilon\rightarrow 0}\left(\frac{1}{2\pi}\right)^{\frac{\dim N_1}{2}}\int_{N_1}
\widehat A\left(R^{K,\epsilon}\right){\rm ch}\left(E_{\bf C}\right)
=\left\langle\widehat A(TB)\frac{1}{e} \left( \frac{\frac{e}{2}}{\sinh\left(\frac{e}{2}\right)}-1\right){\rm ch}\left(i^*E_{\bf C}\right),[B]\right\rangle.
\end{align} 

Formula (\ref{a1}) now follows from (\ref{a3})-(\ref{a5}) and our index theorem Theorem \ref{t4.3}.

The proof of Theorem \ref{ta2} is completed.
\end{proof}

\begin{rem}\label{ta4}
While in general the mod 2 topological index is difficult to calculate, Theorem \ref{ta2} and \cite[Theorem 3.2]{Z1} show that they can be computed through characteristic numbers in some cases.
\end{rem}

\begin{rem}\label{ta5}
Compare with Zhang \cite{Z3}, where we treated the case of orientable $B$. We are not satisfied that in \cite{Z3}, a formula corresponding to (\ref{a1}) here implies the formula corresponding to \cite[Theorem 3.2]{Z1}, while here for the case where $B$ is non-orientable, we don't see such an implication, at least at this moment.
\end{rem}

\begin{rem}\label{ta6}
As we now have formulated the Rokhlin type congruence in a purely topological way, we would like to see a proof of (\ref{a1}) similar to what we have done in \cite{Z3} for the case where $B$ is orientable.
\end{rem}

Now take $E=TK$ in (\ref{a1}). One gets
\begin{align}\label{a6}
\left\langle\widehat A(TK){\rm ch}\left(T_{\bf C}K\right),[K]\right\rangle\equiv {\rm ind}^t\left(TB\oplus N\right)
-\frac{1}{2}\left\langle \widehat A(TB)\tanh\left(\frac{e}{4}\right){\rm ch}\left(T_{\bf C}B\oplus N_{\bf C}\right),[B]\right\rangle\ \ ({\rm mod}\ 2).
\end{align}

On the other hand, by \cite[Theorem 3.2]{Z1} and the index theorem Theorem \ref{t4.3}, one has
\begin{multline}\label{a7}
\left\langle\widehat A(TK){\rm ch}\left(T_{\bf C}K\right),[K]\right\rangle\equiv {\rm ind}^t\left(TB\oplus {\bf R}\oplus o(TB)\right)
-\frac{1}{2}\left\langle \widehat A(TB)\tanh\left(\frac{e}{4}\right){\rm ch}\left(T_{\bf C}B\right),[B]\right\rangle
\\
+\left\langle \widehat A(TB)\frac{ {\rm ch}\left(N_{\bf C}\right)-2\cosh\left(\frac{e}{2}\right)}{2\sinh\left(\frac{e}{2}\right)},[B]\right\rangle\ \ ({\rm mod}\ 2).
\end{multline}

The following  result is a direct consequence of (\ref{a6}) and (\ref{a7}).

\begin{thm}\label{ta7}
The following identity holds,
\begin{align}\label{a8}
{\rm ind}^t(N)-{\rm ind}^t({\bf R}\oplus o(TB))=\left\langle\widehat A(TB)\sinh(e),[B]\right\rangle\ \ ({\rm mod}\ 2).
\end{align}
\end{thm}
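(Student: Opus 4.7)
The plan is to read off Theorem~\ref{ta7} as a purely algebraic consequence of the two Rokhlin-type congruences (\ref{a6}) and (\ref{a7}), which both compute $\langle \widehat A(TK)\ch(T_{\bf C}K),[K]\rangle$ modulo~$2$. My first step is to equate their right-hand sides. Using the fact (noted immediately after Definition~\ref{t2.8}) that ${\rm ind}^t: KO(B)\to {\bf R}/2{\bf Z}$ is a group homomorphism, I split ${\rm ind}^t(TB\oplus N)={\rm ind}^t(TB)+{\rm ind}^t(N)$ and ${\rm ind}^t(TB\oplus{\bf R}\oplus o(TB))={\rm ind}^t(TB)+{\rm ind}^t({\bf R}\oplus o(TB))$; the common ${\rm ind}^t(TB)$ then cancels, and since $\ch$ is additive on direct sums the $\widehat A(TB)\tanh(e/4)\ch(T_{\bf C}B)$ characteristic numbers cancel as well. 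The residual congruence reads
\begin{equation*}
{\rm ind}^t(N)-{\rm ind}^t({\bf R}\oplus o(TB))\equiv\tfrac12\langle\widehat A(TB)\tanh(e/4)\ch(N_{\bf C}),[B]\rangle+\left\langle\widehat A(TB)\tfrac{\ch(N_{\bf C})-2\cosh(e/2)}{2\sinh(e/2)},[B]\right\rangle\pmod 2.
\end{equation*}

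What remains is to show the right-hand side equals $\langle\widehat A(TB)\sinh(e),[B]\rangle$. Since $N$ is a real rank-two bundle with (twisted) Euler class $e$, the formal Chern roots of its complexification are $\pm e$, so $\ch(N_{\bf C})=2\cosh(e)$. Substituting, I reduce to the formal power-series identity
\begin{equation*}
\cosh(e)\tanh(e/4)+\frac{\cosh(e)-\cosh(e/2)}{\sinh(e/2)}=\sinh(e).
\end{equation*}
Setting $t=e/4$, the identities $\cosh(4t)-\cosh(2t)=2\sinh(3t)\sinh(t)$ and $\sinh(2t)=2\sinh(t)\cosh(t)$ reduce the second summand to $\sinh(3t)/\cosh(t)$, so the left-hand side becomes $\bigl[\cosh(4t)\sinh(t)+\sinh(3t)\bigr]/\cosh(t)$. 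Applying the product-to-sum formulas $\cosh(4t)\sinh(t)=\tfrac12[\sinh(5t)-\sinh(3t)]$ and $\sinh(4t)\cosh(t)=\tfrac12[\sinh(5t)+\sinh(3t)]$ collapses this numerator to $\sinh(4t)\cosh(t)$; dividing by $\cosh(t)$ yields $\sinh(4t)=\sinh(e)$, as required.

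I do not foresee any serious obstacle; the argument is bookkeeping plus a short hyperbolic manipulation. The only point that deserves care is twisted cohomology: as emphasized in Remark~\ref{ta3}, $e\in H^2(B,o(TB))$ is twisted, so each term must be checked to be well defined as a characteristic number. The identity $\ch(N_{\bf C})=2\cosh(e)$ involves only even powers of $e$ and is therefore a genuine integral class, while $\tanh(e/4)$ and $\sinh(e)$ are odd in $e$, making them sections of $o(TB)$ that pair consistently with the twisted fundamental class $[B]$ of the non-orientable manifold $B$.
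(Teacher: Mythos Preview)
Your argument is correct and follows exactly the route the paper intends: the paper states Theorem~\ref{ta7} as ``a direct consequence of (\ref{a6}) and (\ref{a7})'' with no further details, and what you have written is precisely that deduction carried out in full, including the hyperbolic identity that collapses the characteristic-number side to $\langle\widehat A(TB)\sinh(e),[B]\rangle$. One small citation fix: the homomorphism property stated after Definition~\ref{t2.8} is for ${\rm ind}^a$, not ${\rm ind}^t$; you should invoke Theorem~\ref{t4.3} to transfer it to ${\rm ind}^t$ before splitting the direct sums.
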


\begin{cor}\label{ta8}
For any two dimensional vector bundle $N$ over an $8k+2$ dimensional compact pin$^-$ manifold $B$ verifying that $w_1(N)=w_1(TB)$, the following identity holds,
\begin{align}\label{a9}
2\, {\rm ind}^t(N)\equiv 2\,{\rm ind}^t({\bf R}\oplus o(TB))\ \ ({\rm mod}\ {\bf Z}).
\end{align}
\end{cor}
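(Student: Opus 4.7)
The plan is to combine Theorem \ref{ta7} with a transfer argument on the oriented spin double cover of $B$. Multiplying Theorem \ref{ta7} by $2$ yields
\begin{align*}
2\,{\rm ind}^t(N) - 2\,{\rm ind}^t({\bf R}\oplus o(TB)) \equiv 2\left\langle \widehat A(TB)\sinh(e),[B]\right\rangle \pmod{4},
\end{align*}
so in order to prove the corollary it suffices to show that
\begin{align*}
2\left\langle \widehat A(TB)\sinh(e),[B]\right\rangle \in {\bf Z}.
\end{align*}
The hypothesis $w_1(N)=w_1(TB)$ is used to identify $o(N)=o(TB)$, so that $\sinh(e)$, being an odd series in the twisted Euler class $e\in H^2(B;o(TB))$, pairs naturally with the twisted fundamental class $[B]\in H_{8k+2}(B;o(TB))$.

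Let $\pi\colon\tilde B\to B$ denote the orientation double cover. Since a pin$^-$ structure together with an orientation on the pullback canonically yields a spin structure, $\tilde B$ is a spin manifold of dimension $8k+2$. Also $w_1(\pi^*N)=\pi^*w_1(TB)=0$, so the rank two bundle $\pi^*N$ is orientable and thus acquires a canonical complex structure, making it a complex line bundle $L\to\tilde B$ with $c_1(L)=\pi^*e$. The transfer identity $\int_{\tilde B}\pi^*\alpha = 2\int_B\alpha$ for $o(TB)$-twisted characteristic numbers under the double cover, together with naturality of $\widehat A$, gives
\begin{align*}
\left\langle \widehat A(T\tilde B)\sinh(c_1(L)),[\tilde B]\right\rangle = 2\left\langle \widehat A(TB)\sinh(e),[B]\right\rangle.
\end{align*}
Writing $\sinh(c_1(L))=\tfrac12({\rm ch}(L)-{\rm ch}(L^{-1}))$ and invoking the Atiyah--Singer index theorem on the spin manifold $\tilde B$, the left side equals $\tfrac12\bigl({\rm ind}(D_L)-{\rm ind}(D_{L^{-1}})\bigr)$, where $D_L$ is the Dirac operator on $\tilde B$ twisted by $L$.

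The proof closes by a dimension-parity observation. The class $\widehat A(T\tilde B)\cosh(c_1(L))$ is a polynomial in the Pontryagin classes of $T\tilde B$ and in $c_1(L)^2$, hence lies entirely in cohomology degrees divisible by $4$. Since $\dim\tilde B=8k+2\not\equiv 0\pmod 4$, its pairing with $[\tilde B]$ vanishes, so
\begin{align*}
{\rm ind}(D_L)+{\rm ind}(D_{L^{-1}})=2\left\langle \widehat A(T\tilde B)\cosh(c_1(L)),[\tilde B]\right\rangle=0.
\end{align*}
Consequently ${\rm ind}(D_L)-{\rm ind}(D_{L^{-1}})=2\,{\rm ind}(D_L)\in 2{\bf Z}$, and therefore $2\langle \widehat A(TB)\sinh(e),[B]\rangle={\rm ind}(D_L)\in{\bf Z}$, which is what we needed.

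The main obstacle is the bookkeeping on $\tilde B$: one must verify that the pin$^-$ structure really lifts canonically to a spin structure on $T\tilde B$, that the twisted Euler class $e$ pulls back to the ordinary Chern class $c_1(L)$ of the complex line bundle $\pi^*N$, and that the transfer identity applies to the $o(TB)$-twisted characteristic number. Once those geometric identifications are in place, the integrality of $2\langle \widehat A(TB)\sinh(e),[B]\rangle$ is forced by the mod $4$ incompatibility of $\dim B$ with the degrees appearing in $\widehat A\cdot\cosh$.
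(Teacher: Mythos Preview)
Your argument is correct and follows essentially the same route as the paper: pass to the orientation double cover $\tilde B$, use that $\tilde B$ is spin and $\tilde N=\pi^*N$ is orientable, apply the transfer identity to rewrite $2\langle\widehat A(TB)\sinh(e),[B]\rangle$ as $\langle\widehat A(T\tilde B)\sinh(\tilde e),[\tilde B]\rangle$, and then invoke integrality on the spin manifold $\tilde B$ together with (\ref{a8}). The paper compresses the integrality step into a single citation of Atiyah--Hirzebruch, whereas you spell it out via $\sinh(c_1(L))=\tfrac12(\mathrm{ch}(L)-\mathrm{ch}(L^{-1}))$ and the degree-parity observation that $\widehat A\cdot\cosh(c_1(L))$ lives in degrees divisible by $4$ and hence pairs trivially with $[\tilde B]$ in dimension $8k+2$; this extra step is a welcome clarification but not a different method.
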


\begin{proof}
Let $\widetilde B$ be the orientation cover of $B$. Then $N$ lifts to an orientable vector  bundle $\widetilde N$ over $\widetilde B$ with Euler class $\widetilde e$. Now $\widetilde B$ is a spin manifold, so by the classical theorem of Atiyah and Hirzebruch \cite{AH}, the number $\langle\widehat A(T\widetilde B)\sinh(\widetilde e),[\widetilde B]\rangle=2\langle \widehat A(TB)\sinh(e),[B]\rangle$ is an integer. Formula (\ref{a9}) follows from this fact and (\ref{a8}).
\end{proof}

\begin{rem}\label{ta9}
A formula of form (\ref{a8}) for the case where $B$ is spin has been proved in \cite[Corollary 13]{Z3} before. This later result has been extended by Tian-Jun Li \cite{L} to the case where $N$ can be any complex vector bundle over an $8k+2$ dimensional compact spin manifold. It would be interesting to formulate and prove an analogous generalization for (\ref{a8}) here. 
\end{rem}

$\ $

\noindent{\bf Acknowledgements.}   {This work was
partially supported by NSF grant n$^\circ$ DMS 9022140 through an MSRI postdoctoral fellowship.
}

\def\cprime{$'$} \def\cprime{$'$}
\providecommand{\bysame}{\leavevmode\hbox to3em{\hrulefill}\thinspace}
\providecommand{\MR}{\relax\ifhmode\unskip\space\fi MR }
\providecommand{\MRhref}[2]{%
  \href{http://www.ams.org/mathscinet-getitem?mr=#1}{#2}
}
\providecommand{\href}[2]{#2}

\end{document}